\patchcmd{\@setauthors}{\MakeUppercase}{}{}{}
\newtheorem{theorem}{Theorem}[section]
\newtheorem{lemma}[theorem]{Lemma}
\newtheorem{prop}[theorem]{Proposition}
\newtheorem{cor}[theorem]{Corollary}
\theoremstyle{definition}
\newtheorem{definition}[theorem]{Definition}
\theoremstyle{remark}
\newtheorem*{remark}{Remark}
\numberwithin{equation}{section}
\DeclareMathOperator*{\arginf}{arginf}
\DeclareMathOperator{\pr}{pr}
\DeclareMathOperator{\Cov}{Cov}
\DeclareMathOperator{\dist}{dist}
\DeclareMathOperator{\vol}{dVol}
\DeclareMathOperator{\eqdist}{\overset{d}{=}}
\DeclareMathOperator{\D}{K}
\newcommand{\ddf}[3]{\frac{\dd^{#1} #2}{\dd #3^{#1}}}
\newcommand{\innerprod}[1]{\left\langle #1 \right\rangle}
\definecolor{vividburgundy}{rgb}{0.62, 0.11, 0.21}
\begin{document}

\author{Jordi-Llu\'{i}s Figueras}

\author{Aron Persson}
%
\author{Lauri Viitasaari}
%
%
\title{On parameter estimation for $N(\mu,\sigma^2 I_3)$ based on projected data into $\mathbb{S}^2$}

\begin{abstract}
We consider the projected normal distribution, with isotropic variance, on the 2-sphere using intrinsic statistics. We show that in this case, the expectation commutes with the projection and that the covariance of the normal variable has a 1-1 correspondence with the intrinsic covariance of the projected normal distribution. This allows to estimate, after model identification, the parameters of the underlying normal distribution that generates the data. 
\end{abstract}

\maketitle

\vskip0.3cm
{\bf 2010 AMS Classification Numbers: 62H11 (Primary), 62F10 (secondary)} 
\vskip0.3cm
{\bf Key Words}:  Spherical statistics, Projected normal, Parameter estimation

\tableofcontents

\section{Introduction}
Directional or spherical statistics form a relevant subfield of statistics in which one studies directions, rotations, and axes. A typical situation in spherical statistics is that observations are gathered on a sphere, say $\mathbb{S}^2$, and consequently the methods have to be adapted to non-Euclidean geometry. More generally, one can think of observations on more general compact Riemannian manifolds. Application areas are numerous as, for example, one can think of $\mathbb{S}^2$ representing the earth, and measurements are then observations on the surface. To simply name a few application areas, see \cite{lang2012mpg,sveier2018pose} for navigation/control in robotics, \cite{nunez2015bayesian,Nunez-Gutierrez-Escarela-2011} for modelling of wind directional changes, \cite{gao2007evidence} for finding lymphoblastic Leukemia cells,  \cite{chang1993spherical} for movement of tectonic plates, \cite{levitt1976simplified} for modelling of protein chains, and \cite{hussien2014multi,pitaval2013joint,seddik2017multi} for radiology applications in the context of MIMO-systems. For details on spherical statistics, we refer to the monograph \cite{Fisher-Lewis-1993}. 

One of the central problems in statistics is to estimate model parameters from observations. In spherical context, this can mean for example that one assumes a parametric distribution $P_\theta$ on $\mathbb{S}^2$ and then uses the data to estimate the unknown $\theta$. The most commonly applied distributions on sphere are Von Mises-Fisher distributions (also called Von Mises distribution in $\mathbb{S}$ or Kent distribution in $\mathbb{S}^2$) that can be viewed as the equivalent of normal distribution on the sphere. For the parameter estimation for Von Mises-Fisher distributions, see \cite{Tanabe-etal-2007}. Another widely applied distribution on $\mathbb{S}^d$ is the projected normal distribution. That is, the distribution of $X/\Vert X\Vert$ for $X\sim N(\mu,\Sigma)$. While the density function of the projected normal is well-known (see \cite{hernandez2017general}), the parameter estimation is much less studied due to the complicated nature of the density. In particular, the parameter estimation is studied in the circular case ($\mathbb{S}$), see \cite{Nunez-Gutierrez-2005,Nunez-Gutierrez-Escarela-2011,Presnell-etal-1998,Wang-Gelfand-2013}.

In this article, we consider the problem of parameter estimation related to the projected normal distribution onto $\mathbb{S}^2$. In contrast to the existing literature, we do not consider estimation of the parameters of the projected normal that can be obtained via standard methodology as the density is completely known. Instead, our aim is to extract information on the underlying normal distribution $N(\mu,\Sigma)$ (with support on the ambient Euclidean space $\mathbb{R}^3$) based solely on data consisting of projected points on $\mathbb{S}^2$. Obviously, we immediately run into identifiability issues if we observe only $X / \Vert X\Vert$ instead of $X \sim N(\mu,\Sigma)$. First of all, it is clear that one cannot identify arbitrary shapes of $\Sigma$ (for example, the distribution can be arbitrarily spread on the direction $\mu$ and this cannot be observed as all points in this direction are equally projected). For this reason we assume, for the sake of simplicity, isotropic variance $\Sigma =\sigma^2 I_3$ and assume $X \sim N(\mu, \sigma^2 I_3)$. However, even in this case, we can only estimate the quantities $\mu / \Vert \mu\Vert$ (the direction of the location) and $\sigma^2 / \Vert \mu \Vert^2$, the reason being that the distribution of the projection $\pr(aX)$ is invariant on $a>0$. This is indeed natural, as intuitively one can only estimate the direction $\mu / \Vert \mu\Vert$ from the projections (onto $\mathbb{S}^2$). Similarly, $N(\mu,\sigma^2 I_3)$ for larger $\sigma$ located in a distant $\mu$ seems similar to normal distribution with smaller variance but located closer, if one is only observing both distributions on the surface of a sphere. We also note that estimation of the direction $\mu / \Vert \mu\Vert$ is already well-known, and we claim no originality in this respect. Instead, our main contribution is the estimation of $\lambda= \sigma^2 / \Vert \mu \Vert^2$. For this, we study the covariance matrix of $X/\Vert X\Vert$ on $\mathbb{S}^2$ and by linking it to certain special functions and analysing their series expansions, we show that there is bijective mapping between $\lambda$ and the covariance matrix of $X/\Vert X\Vert$ on $\mathbb{S}^2$. As the latter can be estimated by using the methods of spherical statistics, we obtain the a consistent estimator for $\lambda$ via the inverse mapping that can be computed, e.g. via bisection method. 

The rest of the article is organised as follows. In Section \ref{sec:main} we present and discuss our main results. We begin with Section \ref{sec:setting} where we introduce our setup and notation. After that, we discuss convergence of sample estimators (for mean and covariance) in a context of general Riemannian manifold. The case of projected normal in $\mathbb{S}^2$ is then discussed in Section \ref{sec:S2}. All the proofs are postponed to Section \ref{sec:proofs}.

\section{Main result}
\label{sec:main}
In this section we present and discuss our main results. 
First we shall introduce some notation and clarify some terminology used in the context of manifold-valued random variables. Section \ref{sec:parest} discuss convergence of sample covariances on a general compact manifold, while the special case of $\mathbb{S}^2$ and projected normal distribution is treated in Sections \ref{sec:S2}.

\subsection{Notation and General Setting}
\label{sec:setting}
Let $M$ be a smooth and compact $n$-dimensional manifold embedded in $\mathbb{R}^k$. At each point the tangent space, $T_x M$, is imbued with the Euclidean inner product (Riemannian metric), inherited from the ambient space $\mathbb{R}^k$ making $M$ an isometrically embedded manifold as a \textit{Riemannian manifold}. The Riemannian metric, denoted $\innerprod{ \cdot,\cdot}$, induces a notion of length along smooth curves, $\gamma:[a,b]\rightarrow M$ by the formula
\[
L(\gamma) = \int_a^b \norm{\dot{\gamma}(t)} \dd t.
\]
Given two points, or equivalently a starting point and starting velocity, the curve which minimizes this distance is called a \textit{geodesic}. Centered at a point $x\in M$, the map $\exp_x:T_xM\rightarrow M$ is called the Riemannian exponential map centered at $x\in M$. Given a tangent vector $v_x\in T_xM$ let $\gamma$ be the unit speed geodesic such that $\gamma(0)=x$ and $\dot{\gamma}(0)=\frac{v_x}{\norm{v_x}}$, then
\[
\exp(v_x)=\gamma(\norm{v_x}).
\]
We shall assume that $M$ is \textit{geodesically complete} meaning that the exponential map is defined on the whole $T_x M$ for all $x\in M$. Denote the inverse of $\exp_p$ (restricted to all points $y$ for which there is a unique geodesic connecting $y$ to $p$) by $\log_p$. The distance function, $\dist: M\times M\rightarrow [0,\infty)$ (which is a topological metric) is then defined as $\dist(x,y)=\norm{\log_x(y)}$. The Riemannian metric also gives a way to measure the volume (and orientation) of a parallelotope inside the tangent space, i.e. a rescaling of the determinant from linear algebra. This generalized determinant shall be referred to as $\vol_M$, or the Riemannian volume form, which locally looks like the Lebesgue measure. By \textit{vector field}, we mean a smooth assignment of a point $x\in M$ to a tangent vector $T_x M$, and the space of vector fields is denoted by $\mathfrak{X}(M)$. For a given smooth function $f\in C^\infty(M)$, a vector field $X$ acts as a derivation on $f$ in the following sense. Take a smooth curve $(-\varepsilon,\varepsilon)\rightarrow M$ such that $\gamma(0)=p$, $\dot{\gamma}(0)=X(p)$. Then the function $X(f):M\rightarrow \mathbb{R}$ is point-wise defined by $X(f)(p)=(f\circ \gamma)'(0)$. 

Note that for given vector fields $X,Y:M\rightarrow TM$, the expression $\innerprod{X,Y}$ is smooth function $M\rightarrow \mathbb{R}$. In order to differentiate vector fields a notion of \textit{connection} is required. Here we shall use the \textit{Levi-Civita connection} $\nabla:\mathfrak{X}(M)\times \mathfrak{X}(M)\rightarrow \mathfrak{X}(M)$ which is uniquely defined by the following identities:
\begin{enumerate}[i)]
    \item $\nabla_{fX+gY} Z= f\nabla_X Z+g\nabla_Y Z$ for all smooth functions $f,g\in \mathcal{C}^\infty(M)$ and all smooth  vector fields $X,Y,Z\in \mathfrak{X}(M)$;
    \item $\nabla_{X} (gY+hZ)= g\nabla_X Y +  X(g) Y +h \nabla_X Z +  X(h) Z $ for all smooth functions $g,h\in \mathcal{C}^\infty(M)$ and all smooth vector fields $X,Y,Z\in \mathfrak{X}(M)$;
    \item $X(\innerprod{Y,Z})=\innerprod{\nabla_X(Y),Z}+\innerprod{Y,\nabla_X(Z)}$ for all smooth vector fields $X,Y,Z\in \mathfrak{X}(M)$;
    \item $(\nabla_X Y-\nabla_Y X)(f)= X(Y(f))-Y(X(f))$ as derivations for all smooth functions $f\in \mathcal{C}^\infty(M)$ and all smooth vector fields $X,Y\in \mathfrak{X}(M)$.
\end{enumerate}
Lastly, let $P_{q,\ell}:T_{q}M \rightarrow T_\ell M$ denote the parallel transport from $q$ to $\ell$. That is, $P_{q,\ell}(v_{q})$ is the unique point $\tau(\dist(q,\ell)) \in T_\ell M$ that satisfies the initial value problem
\[
\nabla_{\dot{\gamma}} \tau(t)=0, \qquad \tau(0) =v_{q},
\]
where $\gamma:[0,\dist(q,\ell)]\rightarrow M$ is the geodesic connecting $q$ to $\ell$ and $\tau$ is a vector field along $\gamma$, i.e. $\tau(t)= \tau_{\gamma(t)}$. It is worthwhile to consider the parallel transport of the Riemannian logarithm map
\[
q \longmapsto P_{q,\ell}\log_{q}(\nu), 
\]
where $\nu,\ell \in M$ are fixed points. This map has Taylor expansion around $q= \ell$ given by
\begin{equation}
\label{eq:Taylor}
P_{q,\ell}\log_{q}(\nu) = \log_\ell(\nu) + \nabla_{\log_\ell(q)} \log_\ell(\nu) + \mathcal{O}(\dist(q,\ell)^2).
\end{equation}

Let $S\in \mathbb{R}^k$ be a dense subset and let $\pr:S\rightarrow M$ be the projection onto $M\subseteq \mathbb{R}^k$, assumed smooth everywhere on $S$ and hence almost everywhere in $\mathbb{R}^k$. Let $(\Omega,\mathbb{P})$ be a probability space and consider a normally distributed (multivariate) random variable
\[
X: \Omega \rightarrow \mathbb{R}^k
\]
such that
\[
\mathbb{P}(X\notin S)= 0.
\]
Let $x_1,\dots ,x_L$ be an independently drawn sample of $X$. Suppose now that we observe
\[
\pr(x_1) ,\dots ,\pr(x_L).
\]
The question is now can one estimate the mean and covariance of this projected sample?

In order for this question to have meaning a notion of mean and covariance is needed for manifold-valued random variables. The intrinsic mean, aka Frech\'et mean, (see \cite{pennec2006intrinsic}) of an absolutely continuous random variable $X:\Omega \rightarrow M$ is defined as follows.

\begin{definition}
\label{def:frechetmean}
Let $M$ be a Riemannian manifold with corresponding distance function $\dist$ and volume form $\vol_M$ respectively. Moreover suppose $p_X:M\rightarrow [0,\infty)$ is a probability density function of some absolutely continuous random variable $X:\Omega \rightarrow M$. The expected value of $X$ is then defined by
\begin{equation}
\label{eq:arginfintegral}
\arginf_{q\in M} \int_M \dist(q,y)^2 p_{X}(y) \vol_M(y) = \mathbb{E}[X]. 
\end{equation}
\end{definition}

\begin{remark}
The argument in the infinum in \eqref{eq:arginfintegral} need not exist nor does it need to be unique. For example, consider $X\eqdist N(0,\sigma^2 I)\in \mathbb{R}^{n+1}$, then $\pr(X)$ follows the uniform distribution on $\mathbb{S}^n$ and all points in $\mathbb{S}^n$ satisfy the infinum in \eqref{eq:arginfintegral}. This is a very important difference from the case of $\mathbb{R}^n$-valued random variables. In $\mathbb{R}^n$ we know that if $X$ is absolutely continuous, integrable and square integrable, then any point $\mu \in \mathbb{R}^n$ which minimizes the least square integral
\[
\int_{\mathbb{R}^n} \abs{x-\mu}^2 p_X \dd x
\]
is unique.
\end{remark}

The intrinsic definition of the covariance matrix for a random variable on a Riemannian manifold is well known as well (see, e.g.  \cite{pennec2006intrinsic}).

\begin{definition}
\label{def:intrinsiccovar}
Let $M$ be a geodesically complete Riemannian manifold, and let $\log_q:M \rightarrow T_qM$ be the natural log map (defined a.e.). Let $X$ be an $M$-valued absolutely continuous random variable with intrinsic mean $\mu = \mathbb{E}[X] \in M$. Then, the \textit{Covariance matrix} for $X$ is the linear map $\Cov(X) : T_\mu M\rightarrow T_\mu M$ defined by the integral
\[
\Cov(X)=\int_M \log_\mu(y)\log_\mu(y)^T p_X(y)\vol_M(y).
\]
\end{definition}

\subsection{Convergence of Sample Estimators on a Compact Manifold}
\label{sec:parest}

Let $y_\ell$ be a sample of $L$ independent measurements on a compact manifold $M$. In order to estimate the mean of such an sample, we shall utilize the discrete version of Definition \ref{def:frechetmean}. This definition follows that of \cite{pennec2006intrinsic}.

\begin{definition}
\label{def:empiricalmean}
Let $\{y_\ell\}^L_{\ell=1}$ be a sample of $L$ points on a Riemannian manifold $M$. Then the \textit{empirical mean} is defined as
\[
\arginf_{q\in M} \sum_{\ell=1}^L \dist^2(y_\ell,q).
\]    
\end{definition}

Note that since the function $\dist^2$ has good regularity, one may hope to find such an infinum by solving the (non-linear) equation
\[
q: \sum_{\ell=1}^L \log_q(y_{\ell})=0.
\]
It was shown in \cite{hotz2022central} that if $M$ is compact and if $y_{\ell}$ is sampled from a distribution with a unique empirical mean $x_0$, then the distribution of the above follows a central limit theorem. More precisely, if $\mu=\mathbb{E}[y_\ell]$ is the true mean of the underlying distribution and if $\hat{\mu}$ is the empirical mean, it holds that
\[
\sqrt{L}\log_\mu\left(\hat{\mu}\right) \overset{\text{d}}{\longrightarrow} N(0,V)
\]
for some linear map $V\in \mathcal{L}(T_\mu M,T_\mu M)$. 

\begin{definition}
Let $\{y_\ell\}^L_{\ell=1}$ be a sample of $L$ points on a Riemannian manifold $M$ with an unique empirical mean $\hat{\xi}$. Then the the \textit{empirical covariance} of the sample is defined by
    \begin{equation}
        \label{eq:empiricalcovariance}
        \hat{V}=\frac{1}{L-1} \sum_{\ell=1}^L \log_{\hat{\xi}}(y_\ell)\log_{\hat{\xi}}(y_\ell)^T    
    \end{equation}
    where $\hat{V}: T_{\hat{\xi}}M\rightarrow T_{\hat{\xi}} M$ is a linear map.
\end{definition}

Since the empirical mean converges by the results of \cite{hotz2022central}, it remains to verify that the empirical covariance in Equation \eqref{eq:empiricalcovariance} converges to the covariance $V$ of the limiting distribution $ \lim_{L\to \infty}\sqrt{L}\log_\xi\left(\hat{\xi}\right)$. 

The following result shows that, in the case of isotropic covariance, the empirical covariance converges. The proof is postponed into Section \ref{sec:empiricalvarience}.
\begin{theorem}
\label{thm:empiricalvarience}
Let $\{\xi_\ell\}_{\ell=1}^L$ be $L$ independent identically distributed random variables on a compact geodesically complete manifold $M$. Suppose further they have unique mean $\mathbb{E}[\xi_\ell]=\mu$ and isotropic covariance $\Cov(\xi_\ell)=v I$. Then,
\[
\frac{1}{L-1} \sum_{\ell=1}^L \log_{\hat{\mu}}(\xi_\ell)\log_{\hat{\mu}}(\xi_\ell)^T \overset{\mathbb{P}}{\longrightarrow} v I
\]
with the same rate of convergence as the empirical mean $\hat{\mu}$ of the sample $\{\xi_\ell\}_{\ell=1}^L$. 
\end{theorem}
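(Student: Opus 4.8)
The plan is to split the empirical covariance into a \emph{main term}, in which the base point of the logarithm is the true mean $\mu$, and a \emph{centering error} coming from the fact that it is actually formed at $\hat\mu$. The main term is an average of i.i.d.\ bounded symmetric matrices, so its convergence will follow from the law of large numbers and its fluctuation rate $L^{-1/2}$ from the classical central limit theorem. The centering error is controlled by $\dist(\hat\mu,\mu)$, which is $O_{\mathbb{P}}(L^{-1/2})$ by the central limit theorem of \cite{hotz2022central}; this is precisely how the rate of the empirical mean gets inherited by the covariance. Note that, since $vI$ is invariant under parallel transport and under orthogonal conjugation, the convergence to $vI$ is a well-posed statement despite the empirical covariance living on the random tangent space $T_{\hat\mu}M$ --- and it is the isotropy assumption that makes this so.

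For the main term, compactness of $M$ gives $\norm{\log_\mu(\xi_\ell)}=\dist(\mu,\xi_\ell)\le\operatorname{diam}(M)<\infty$, so the matrices $\log_\mu(\xi_\ell)\log_\mu(\xi_\ell)^T$ are i.i.d., bounded, hence integrable, with common expectation $\Cov(\xi_\ell)=vI$ by Definition \ref{def:intrinsiccovar}. The strong law of large numbers then gives $\frac{1}{L-1}\sum_{\ell=1}^{L}\log_\mu(\xi_\ell)\log_\mu(\xi_\ell)^T\to vI$ almost surely, and the central limit theorem shows that its fluctuation around $vI$ is $O_{\mathbb{P}}(L^{-1/2})$.

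To pass from $\mu$ to $\hat\mu$, recall first that $\hat\mu\to\mu$ (consistency, also from \cite{hotz2022central}), so for $L$ large $\hat\mu$ lies in a neighbourhood of $\mu$ on which $\log_\mu$ is a diffeomorphism. Let $Q:=P_{\hat\mu,\mu}:T_{\hat\mu}M\to T_\mu M$ be parallel transport; it is a linear isometry with $Q(vI)Q^T=vI$, and conjugating the empirical covariance by $Q$ turns it into $\frac{1}{L-1}\sum_{\ell=1}^{L}u_\ell u_\ell^T$ with $u_\ell:=Q\log_{\hat\mu}(\xi_\ell)\in T_\mu M$. The Taylor expansion \eqref{eq:Taylor}, applied with $q=\hat\mu$, base point $\mu$ and $\nu=\xi_\ell$, gives $u_\ell=\log_\mu(\xi_\ell)+e_\ell$ where, once $L$ is large enough that $\dist(\hat\mu,\mu)\le1$, one has $\norm{e_\ell}\le c_\ell\,\dist(\hat\mu,\mu)$ with $c_\ell$ a bound for the covariant derivative of $y\mapsto\log_y(\xi_\ell)$ near $\mu$ (this absorbs both the linear term $\nabla_{\log_\mu(\hat\mu)}\log_\mu(\xi_\ell)$ and the quadratic remainder). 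Expanding $u_\ell u_\ell^T=\log_\mu(\xi_\ell)\log_\mu(\xi_\ell)^T+\log_\mu(\xi_\ell)e_\ell^T+e_\ell\log_\mu(\xi_\ell)^T+e_\ell e_\ell^T$, averaging over $\ell$, and using $\norm{\log_\mu(\xi_\ell)}\le\operatorname{diam}(M)$ gives
\[
\norm{Q\Bigl(\tfrac{1}{L-1}\textstyle\sum_{\ell=1}^{L}\log_{\hat\mu}(\xi_\ell)\log_{\hat\mu}(\xi_\ell)^T\Bigr)Q^T-\tfrac{1}{L-1}\textstyle\sum_{\ell=1}^{L}\log_\mu(\xi_\ell)\log_\mu(\xi_\ell)^T}_{\mathrm{op}}\le\Bigl(\tfrac{2\operatorname{diam}(M)}{L-1}\textstyle\sum_{\ell=1}^{L}c_\ell\Bigr)\dist(\hat\mu,\mu)+\Bigl(\tfrac{1}{L-1}\textstyle\sum_{\ell=1}^{L}c_\ell^2\Bigr)\dist(\hat\mu,\mu)^2 .
\]
Conjugating back by $Q^T$, which is orthogonal and fixes $vI$ (so it preserves the operator distance to $vI$), and combining with the main term, the claim $\norm{\hat V-vI}_{\mathrm{op}}=O_{\mathbb{P}}(L^{-1/2})$ will follow once the right-hand side above is shown to be $O_{\mathbb{P}}(L^{-1/2})$.

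That last bound is where the genuine work lies, and it is the step I expect to be the main obstacle: $c_\ell$ degenerates as $\xi_\ell$ approaches the cut locus of $\mu$, so no single constant can be pulled out of the sums. The way around this is that, by compactness of $M$ and absolute continuity of $\xi_\ell$, the function $y\mapsto\sup_{q\ \text{near}\ \mu}\norm{\nabla_q\log_q(y)}$ is finite almost everywhere and --- under a mild integrability condition on the density near the cut locus --- integrable against the law of $\xi_1$; on $\mathbb{S}^2$, the case of interest, this is just the $1/\sin\dist(\mu,\cdot)$ blow-up at the antipodal point, which is integrable against the area form. The law of large numbers then gives $\frac{1}{L-1}\sum_{\ell=1}^{L}c_\ell=O_{\mathbb{P}}(1)$, so the first term on the right is $O_{\mathbb{P}}(L^{-1/2})$; and since $\dist(\hat\mu,\mu)^2=O_{\mathbb{P}}(L^{-1})$ while $\frac{1}{L-1}\sum_{\ell=1}^{L}c_\ell^2$ grows at most like $\log L$ (on $\mathbb{S}^2$ the tail of $c_\ell^2$ has index $1$), the second term is of strictly smaller order. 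This integrability of the derivative of the logarithm map against the distribution --- automatic on $\mathbb{S}^2$, and a mild requirement in general --- is the real content of the compactness hypothesis, and establishing it cleanly, rather than the surrounding algebra, is the crux of the proof.
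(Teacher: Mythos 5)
Your proposal follows essentially the same route as the paper's proof: both conjugate the empirical covariance by the parallel transport $P_{\hat\mu,\mu}$, expand $P_{\hat\mu,\mu}\log_{\hat\mu}(\xi_\ell)$ around $\mu$ via \eqref{eq:Taylor}, identify the leading term $\frac{1}{L-1}\sum_\ell\log_\mu(\xi_\ell)\log_\mu(\xi_\ell)^T\to vI$, and control the remainder by $\dist(\hat\mu,\mu)$, inheriting the rate from \cite{hotz2022central}. The differences are in the probabilistic bookkeeping: the paper bounds $\norm{\mathbb{E}[P_{\hat\mu,\mu}\hat V P_{\mu,\hat\mu}-vI]}$ and finishes with dominated convergence on the compact $M$, whereas you run the argument pathwise with the law of large numbers plus a CLT for the main term, which matches the stated convergence in probability more directly. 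Your genuine contribution is naming the real difficulty: the implicit constant in the $\mathcal{O}(\dist(\hat\mu,\mu))$ terms is not uniform in $\xi_\ell$, since $\nabla\log_{\cdot}(\xi_\ell)$ degenerates as $\xi_\ell$ approaches the cut locus; the paper's proof silently absorbs this into its $\mathcal{O}$ and into the dominated-convergence step, so the issue you raise is a soft spot in the published argument as well. Your own patch is not yet airtight, though: on $\mathbb{S}^2$ the function $y\mapsto\sup_{q\in B_\varepsilon(\mu)}\norm{\nabla\log_q(y)}$ is not merely large but $+\infty$ on all of $B_\varepsilon(-\mu)$, a set of positive measure, since $\log_q$ is undefined at $-q$; so the supremum defining $c_\ell$ must be taken over a neighbourhood shrinking with $L$ (e.g.\ after conditioning on the event $\dist(\hat\mu,\mu)\le\delta_L$), and the asserted integrability of $c_\ell$ and the $\log L$ growth of $\frac{1}{L-1}\sum_\ell c_\ell^2$ then require an actual estimate rather than a remark. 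Modulo that last step, the proposal is correct and, where it departs from the paper, more careful.
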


\begin{remark}
    As far as we know the rate of convergence for the empirical mean on compact manifolds is not completely solved as of now. In \cite{hotz2022central} has shown that the empirical mean has a rate of convergence $\sqrt{L}$ for a large class of manifolds, but not all compact, geodesically complete manifolds. However, \cite{ahidar2020convergence} provides rates of convergence for the empirical mean to a general class of metric spaces, for which it seems that the rate of convergence $\sqrt{L}$ isn't necessarily true.
\end{remark}

Following a similar method as in \cite{pennec2012exponential} the update scheme for a sample of observations are given in Algorithm \ref{alg:varestS2}.

\begin{algorithm}
\caption{Estimating intrinsic average and covariance for a sample $y_\ell$ on a manifold $M$.}
\label{alg:varestS2}
\begin{algorithmic}[1]
\State Make initial guess $\hat{\mu}_0$ for average point, take e.g. $\hat{\mu}_0=\xi_1$.
\State Compute
\[
X=\frac{1}{L}\sum_{\ell=1}^L \log_{\hat{\mu}_0}(y_\ell).
\]
\State Update estimate by setting
\[
\hat{\mu} = \exp_{\hat{\mu}_0}(X).
\]
\State Repeat step 2-3 with $\hat{\mu}_0=\hat{\mu}$ until $\norm{X}$ is small.
\State Compute sample covariance by
\[
\hat{V}= \frac{1}{L-1}\sum_{\ell=1}^{L} \log_{\hat{\mu}}(y_\ell) \log_{\hat{\mu}}(y_\ell)^T.
\]
\end{algorithmic}
\end{algorithm}

\subsection{Observing the Projected Normal in $\mathbb{S}^2$}
\label{sec:S2}

In this section we consider $M=\mathbb{S}^2$, the unit 2-sphere in $\mathbb{R}^3$. In this case, the projection map is simply
\[
\pr(x)= \frac{x}{\norm{x}}
\]
and the domain of definition for $\pr$ is $S=\mathbb{R}^3\backslash\{0\}$.

Throughout, we shall use spherical coordinates, i.e. for  classical Cartesian coordinates $(x,y,z)^T$ in $\mathbb{R}^3$ we set $x=\cos(\theta)\sin(\phi)$, $y=\sin(\theta)\sin(\phi)$, and $z=\cos(\phi)$. Consider two points $(\theta_1,\phi_1)^T$ and $(\theta_2,\phi_2)^T$. Then it is a classical result (see e.g. \cite{kells1940plane}) that the distance function may be written as
\[
\dist((\theta_1,\phi_1)^T,(\theta_2,\phi_2)^T)= \acos\left( \cos(\phi_1)\cos(\phi_2)+\sin(\phi_1)\sin(\phi_2)\cos(\theta_2-\theta_1) \right).
\]
It has been shown in \cite{hernandez2017general} that the projected normal, i.e. the random variable defined by $\pr(X)$ where $X \eqdist N(\mu,\Sigma)$, has the following probability density function 
\begin{equation}
\label{eq:sphereprobdens}
p_{\pr(X)}(\theta,\phi) = \left(\frac{1}{2\pi A}\right)^{3/2} \abs{\Sigma}^{-\frac{1}{2}} \exp(C) \left( \D+ \D^2 \frac{\Phi(\D)}{\varphi(\D)} + \frac{\Phi(\D)}{\varphi(\D)} \right)
\end{equation}
at a point $u=(\cos(\theta)\sin(\phi),\sin(\theta)\sin(\phi),\cos(\phi))^T$, where $A=u^T \Sigma^{-1}u$, $B=u^T \Sigma^{-1} \mu$, $C= -\frac{1}{2} \mu^T \Sigma^{-1} \mu$, and $\D=BA^{-\frac{1}{2}}$. Moreover, the functions $\varphi,\Phi:\mathbb{R}\rightarrow \mathbb{R}$ are defined as
\[
\varphi(x) = \exp(-\frac{x^2}{2})
\]
and
\[
\Phi(a) = \int_{-\infty}^a \varphi(x) \dd x,
\]
respectively.

In our context, this random variable is then \textit{observed} in $\mathbb{S}^2$. As $\mathbb{S}^2$ is an isometrically embedded Riemannian manifold, Definition \ref{def:frechetmean} applies for the projected normal. Intuitively the expectation of a projected normal distribution ought to be $\pr(\mu)$. Unfortunately for the fully general case of the projected normal, this conjecture will be false. However, by imposing an isotropic covariance $\Sigma$ onto $X$ we shall show that this intuition is indeed correct. This is the topic of the next result whose proof is postponed into Section \ref{seq:prooflemaverage}.

\begin{theorem}
\label{thm:average}
Let $X$ be a normally distributed random variable with average $\mu\in \mathbb{R}^3$ and covariance matrix $\Sigma$, i.e. $X\eqdist N(\mu,\Sigma)\in \mathbb{R}^3$.  If $\Sigma=\sigma^2 I_3$, then $\mathbb{E}[\pr(X)]=\pr(\mu)$.
\end{theorem}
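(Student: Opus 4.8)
The plan is to exploit the rotational symmetry of the isotropic normal distribution. First I would reduce to a convenient coordinate system: pick a rotation $R \in SO(3)$ with $R\mu = \|\mu\| e_3$, where $e_3 = (0,0,1)^T$ is the north pole. Since $\Sigma = \sigma^2 I_3$ is invariant under conjugation by $R$, the random variable $RX$ is distributed as $N(\|\mu\| e_3, \sigma^2 I_3)$, and because $R$ acts as an isometry of $\mathbb{S}^2$ that commutes with the projection ($\pr(Rx) = R\,\pr(x)$), it commutes with the Fréchet mean as well: $\mathbb{E}[\pr(RX)] = R\,\mathbb{E}[\pr(X)]$. Hence it suffices to prove the claim when $\mu = \|\mu\| e_3$, i.e. to show that the north pole is the intrinsic mean of $\pr(X)$ in that case.

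Next I would use the residual symmetry. With $\mu$ on the $z$-axis, the density $p_{\pr(X)}(\theta,\phi)$ from \eqref{eq:sphereprobdens} depends only on the polar angle $\phi$ and not on the azimuth $\theta$: indeed $A = u^T\Sigma^{-1}u = \sigma^{-2}$, $B = u^T\Sigma^{-1}\mu = \sigma^{-2}\|\mu\|\cos\phi$, $C = -\tfrac{1}{2}\sigma^{-2}\|\mu\|^2$, so $\D = BA^{-1/2} = \sigma^{-1}\|\mu\|\cos\phi$, all functions of $\phi$ alone. Thus $\pr(X)$ is rotationally symmetric about the $z$-axis. Consider the objective function $F(q) = \int_{\mathbb{S}^2}\dist(q,y)^2 p_{\pr(X)}(y)\,\vol_{\mathbb{S}^2}(y)$ from Definition \ref{def:frechetmean}. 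By the axial symmetry, $F$ is invariant under all rotations fixing the $z$-axis, so $F(q)$ depends only on $\dist(q, e_3)$; write $F(q) = g(\dist(q,e_3))$. Consequently every point at a fixed distance from the north pole is a critical point of $F$ in the "azimuthal" directions, and the only candidates for the infimum are the north pole, the south pole, and possibly a whole circle of latitude — the minimizer is determined by the one-variable function $g$ on $[0,\pi]$.

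The core computation is then to show $g$ is minimized (uniquely) at $0$, i.e. that the north pole beats every other latitude. The cleanest route is to verify the first-order (gradient) condition at $q = e_3$ together with a convexity/monotonicity argument for $g$; equivalently, using the characterization recalled after Definition \ref{def:empiricalmean}, to show $\int_{\mathbb{S}^2}\log_{e_3}(y)\, p_{\pr(X)}(y)\,\vol_{\mathbb{S}^2}(y) = 0$. By axial symmetry this integral is a vector on the $z$-axis, and in fact it is $0$ automatically since $\log_{e_3}(y)$ lies in the tangent plane $T_{e_3}\mathbb{S}^2$ and its azimuthal average vanishes — so $e_3$ is a critical point, and likewise the south pole. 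To single out the north pole as the genuine minimizer (rather than the south pole or an intermediate circle) I would show $g$ is strictly increasing on $[0,\pi]$, e.g. by differentiating $g$ and checking the sign, or by a direct comparison: writing $F(q)$ in terms of the angle and using that the density mass is concentrated toward the north pole (which follows from $\D \propto \cos\phi$ being larger near $\phi = 0$, making the bracketed factor in \eqref{eq:sphereprobdens} larger there).

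The main obstacle is this last monotonicity/uniqueness step: the density \eqref{eq:sphereprobdens} involves the Mills-ratio-type term $\D + \D^2\Phi(\D)/\varphi(\D) + \Phi(\D)/\varphi(\D)$, which is not elementary, so one must establish positivity of $g'$ (or a suitable concavity of the mass distribution in $\cos\phi$) from the qualitative behaviour of that special function rather than by closed-form evaluation. I would handle it by recording the monotonicity of $x \mapsto x + (x^2+1)\Phi(x)/\varphi(x)$ and the fact that it is increasing in $x$, so the density weight increases with $\cos\phi$; combined with the elementary fact that on the sphere $\dist(q,\cdot)^2$ integrated against any axially symmetric density whose mass is pushed toward $e_3$ is minimized at $e_3$, this closes the argument. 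The rotational-reduction and azimuthal-symmetry steps are routine; the special-function monotonicity is where the real work lies.
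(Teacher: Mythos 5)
Your proposal is correct in strategy and shares its skeleton with the paper's proof --- rotate $\mu$ onto an axis, use axial symmetry of the density to verify the first-order condition at the pole, then single out $\pr(\mu)$ over the antipode by exploiting that the density is larger on the hemisphere containing $\mu$ --- but you package the global step differently. The paper computes the $\theta_2$- and $\phi_2$-derivatives of the Fr\'echet functional explicitly in spherical coordinates, asserts that $\pm\mu$ are the only local extrema (on the grounds that only there is the distance function rotationally symmetric about the $\mu$-axis), and then compares $F(\mu)$ with $F(-\mu)$ by pairing each $y$ with its reflection $Ry$ through the equatorial plane, using $p_{\pr(X)}(y)>p_{\pr(X)}(Ry)$ for $\langle y,\mu\rangle>0$. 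You instead reduce to the one-variable profile $g(\alpha)=F(q)$, $\alpha=\dist(q,e_3)$, and aim to show $g$ is increasing on $[0,\pi]$; if carried out this is actually the more robust route, since the paper's exclusion of intermediate critical latitudes is the weakest link in its argument (axial invariance only kills the azimuthal derivative, not the latitudinal one), and monotonicity of $g$ would repair that. Two caveats on your sketch. First, the ``elementary fact'' you invoke --- that an axially symmetric density decreasing in colatitude has its Fr\'echet functional minimized at the pole --- is true but is not free: it is proved by exactly the reflection argument the paper uses, applied to the plane bisecting the geodesic from $e_3$ to $q$ (so that $F(q)-F(e_3)=\int_{H^+}\bigl(\dist(q,y)^2-\dist(e_3,y)^2\bigr)\bigl(p(y)-p(Ty)\bigr)\,\mathrm{d}y\ge 0$); you should state and prove this rather than cite it. Second, the monotonicity of $K\mapsto K+(K^2+1)\Phi(K)/\varphi(K)$ does hold for all $K\in\mathbb{R}$ (not just $K\ge 0$, where it is obvious); the clean way to see it is the identity $\varphi(K)\bigl(K+(K^2+1)\Phi(K)/\varphi(K)\bigr)=\int_0^\infty r^2 e^{-(r-K)^2/2}\,\mathrm{d}r$, whence the function equals $\int_0^\infty r^2e^{rK-r^2/2}\,\mathrm{d}r$ and is visibly strictly increasing in $K$. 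With those two lemmas supplied, your argument closes and is, if anything, tighter than the published one.
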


\begin{remark}
    The above theorem is true whenever $\mu$ is an eigenvector of $\Sigma$. However, if $\mu$ is not an eigenvector of $\Sigma$, then the statement of Theorem \ref{thm:average} is false in general. To see this, let $\mu=(0,0,1)^T$ and
    \[
    \Sigma = \begin{pmatrix}
        1 & 0 & 0\\ 0 & 1 & 0.5\\ 0 & 0.5 & 1
    \end{pmatrix}.
    \]
    In this case it follows that the probability density function is not symmetric around $\mu$, see Figure \ref{fig:counterexample} below.
\end{remark}

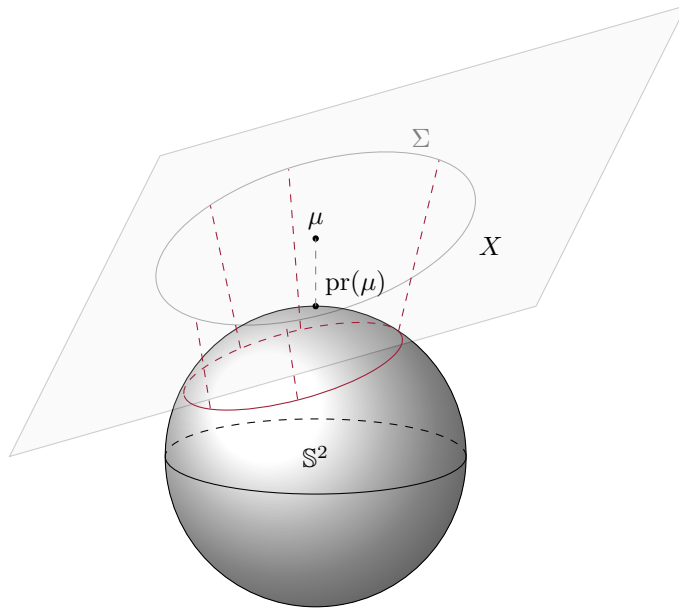
\begin{figure}[!ht]
    \centering
\begin{tikzpicture}[
  point/.style = {draw, circle, fill=black, inner sep=0.7pt},
]
\def\rad{2cm}
\coordinate (O) at (0,0); 
\coordinate (N) at (0,\rad);
\coordinate (C) at (0,1.45*\rad);

\filldraw[ball color=white] (O) circle [radius=\rad];
\draw[dashed] 
  (\rad,0) arc [start angle=0,end angle=180,x radius=\rad,y radius=5mm];
\draw
  (\rad,0) arc [start angle=0,end angle=-180,x radius=\rad,y radius=5mm];
\begin{scope}[xslant=0.5,yshift=\rad,xshift=-2]
\filldraw[fill=gray!20,opacity=0.2]
  (-4,2) -- (2,4) -- (2,0) -- (-4,-2) -- cycle;
\node at (1,0.8) {$X$};  
\end{scope}
\node[above right] at (N) {$\pr(\mu)$};
\node at (O) {$\mathbb{S}^2$};
\node[above] at (C) {$\mu$};
\node[above,color=gray] at (0.7*\rad,2*\rad) {$\Sigma$};
\node[point] at (N) {};
\node[point] at (C) {};
\draw[dashed,color=gray] (N) --(C); 
\draw[dashed,color=vividburgundy] ({-0.7*\rad},{0.32*\rad}) {} -- ({-0.8*\rad},{0.94*\rad}) {};
\draw[dashed,color=vividburgundy] ({0.55*\rad},{0.83*\rad}) {} -- ({0.82*\rad},{1.97*\rad}) {};
\draw[dashed,color=vividburgundy] ({-0.12*\rad},{0.38*\rad}) {} -- ({-0.20*\rad},{0.9*\rad}) {};
\draw[dashed,color=vividburgundy] ({-0.1*\rad},{0.85*\rad}) {} -- ({-0.18*\rad},{1.91*\rad}) {};
\draw[dashed,color=vividburgundy] ({-0.5*\rad},{0.72*\rad}) {} -- ({-0.7*\rad},{1.67*\rad}) {};
\draw[dashed,color=vividburgundy] {[rotate around={15:(-0.15*\rad,0.6*\rad)}] (0.6*\rad,0.6*\rad) arc [start angle=0,end angle=180,x radius=1.5cm,y radius=0.45cm]};
\draw[color=vividburgundy] {[rotate around={15:(-0.15*\rad,0.6*\rad)}] (0.6*\rad,0.6*\rad) arc [start angle=0,end angle=180,x radius=1.5cm,y radius=-0.45cm]};
\draw[rotate around={17:(C)},opacity=0.3] (C) ellipse (2.2cm and 1cm);
\end{tikzpicture}
    \caption{Normally distributed random variable $X\eqdist N(\mu,\Sigma)$, with covariance matrix $\Sigma$ such that the mean $\mu$ is not an eigenvector for $\Sigma$, for which $\mathbb{E}[\pr(X)]\neq \pr(\mu)$.}
    \label{fig:counterexample}
\end{figure}

In general, the tangent space of $\mathbb{S}^2$ at a point $q$ is the set of vectors orthogonal (in the Euclidean product inherited from $\mathbb{R}^3$) to $q$. By noting that the direction of the geodesic connecting a point $\mu$ and $q$ have starting velocity in the direction of $q \times \mu$ (i.e. the cross-product of $\mathbb{R}^3$). The logarithm map of $q$ centered at a point $\mu$ is thus the vector of the starting velocity of the geodesic connecting $\mu$ with $q$ times the length of the geodesic connecting $\mu$ and $q$, see \cite{li2018wavefront}. Without loss of generality, consider $\mu=(0,0,1)^T$. Then
\[
\log_\mu(q)=\begin{pmatrix}
\cos(\theta)\sin(\phi)\\ \sin(\theta) \sin(\phi)
\end{pmatrix} \frac{\phi}{\abs{\sin(\phi)}},
\]
where the basis for the vector is in a rewritten form of the basis (of $T_{\mu}\mathbb{S}^2$)
\[
\left\{\begin{pmatrix}
1\\ 0\\ 0
\end{pmatrix},  \begin{pmatrix}
0\\ 1 \\ 0
\end{pmatrix}\right\}.
\]
Hence the \textit{intrinsic} covariance of $\pr(X)$ can be written as
\[
\begin{aligned}
\Cov(\pr(X)) =& \int_0^{2\pi} \int_0^\pi \begin{pmatrix}
\cos^2(\theta)\sin^2(\phi) & \cos(\theta)\sin(\theta)\sin^2(\phi)\\
\cos(\theta)\sin(\theta)\sin^2(\phi) & \sin^2(\theta)\sin^2(\phi)
\end{pmatrix}\\
&\cdot \frac{\phi^2}{\sin^2(\phi)} \left(\frac{1}{2\pi}\right)^{3/2}  \exp(-\frac{1}{2\sigma^2})\\
&\cdot\left(\D + \D^2\frac{\Phi(\D)}{\varphi(\D)} + \frac{\Phi(\D)}{\varphi(\D)} \right) \sin(\phi)\dd\phi \dd\theta,
\end{aligned}
\]
where $\D= \frac{1}{\sigma}\cos(\phi)$. Or equivalently, by simplifying and integrating w.r.t $\theta$,
\begin{equation}
\begin{aligned}
\Cov(\pr(X)) =& \frac{\pi}{(2\pi)^{3/2}} \exp(-\frac{1}{2\sigma^2}) \begin{pmatrix}
1 & 0\\
0 & 1
\end{pmatrix}\\
&\cdot \int_0^\pi  \phi^2\left(\frac{\cos(\phi)}{\sigma} + \left( \frac{\cos^2(\phi)}{\sigma^2} + 1\right)  \frac{\Phi(\frac{\cos(\phi)}{\sigma})}{\varphi(\frac{\cos(\phi)}{\sigma})} \right) \sin(\phi)\dd\phi\\
=&: \begin{pmatrix}
    1 & 0\\0&1
\end{pmatrix}f(\sigma).
\end{aligned}
\label{eq:varianceS2}
\end{equation}

\begin{remark}
In the limit $\sigma \to \infty$ it holds that $p_{\pr(X)} \to \frac{1}{4\pi}\mathds{1}_{\mathbb{S}^2}$. The intrinsic covariance in this limit is
\[
\frac{\pi^2-4}{4}\begin{pmatrix}
1& 0\\
0 & 1
\end{pmatrix}.
\]
In the limit 
 $\sigma\to 0$ on the other hand, $p_{\pr(X)}$ converges to the point mass distribution while the covariance goes to zero.
\end{remark}


Note that the above estimates are only for the \textit{intrinsic} expectation and covariance. If we want to relate these estimates to the \textit{extrinsic} expectation and covariance we end up with the obvious problem that $\pr(X)$ and $\pr(aX)$, $a>0$, have the same distribution. Hence we need to choose which normal distribution in $\mathbb{R}^3$ corresponds to the intrinsic covariance and average. By Lemma \ref{thm:average} the average is known (up to a factor). Moreover, it turns out there is a one-to-one correspondence between the extrinsic covariance and the intrinsic covariance up to a factor $a>0$, and thus one may estimate the underlying covariance parameter $\sigma$ using the relation \eqref{eq:varianceS2} with e.g. the bisection method. 

The degeneracy of the factor $a$ essentially means that we can only estimate the parameter $\sigma$ in the case when the projected average is the \textit{true} average of the $\mathbb{R}^3$-valued normal random variable. More generally, if $X\eqdist N(\mu,\sigma^2 I_3)$, then $\frac{X}{\Vert \mu\Vert} \eqdist N\left(\frac{\mu}{\Vert \mu\Vert},\frac{\sigma^2}{\Vert \mu\Vert^2} I_3\right)$. Consequently, we can only estimate the quantities $\frac{\mu}{\Vert \mu\Vert}\in \mathbb{S}^2$ and $\frac{\sigma^2}{\Vert \mu\Vert^2}$. This means that, as expected, we can estimate the direction $\frac{\mu}{\Vert \mu\Vert}$ of the extrinsic distribution  $X\eqdist N(\mu,\sigma^2 I_3)$ but not the distance $\Vert \mu\Vert$. For the variance on the other hand, we can only estimate the quantity $\frac{\sigma^2}{\Vert \mu\Vert^2}$. This is expected as well, as for the distribution $ N(\mu,\sigma^2 I_3)$ located far away ($\Vert \mu\Vert$ large) the projections onto $\mathbb{S}^2$ are not wide spread even if $\sigma>0$ would be large.

The required one-to-one correspondence between extrinsic and intrinsic covariances is formulated in the following proposition, whose proof is presented in Section \ref{sec:bijection}. The statement is also illustrated  in Figure \ref{fig:varbij}.
\begin{prop}
\label{prop:bijection}
Let $X$ be a normally distributed random variable with mean $\mu\in \mathbb{R}^3$ and isotropic variance $\sigma^2 I_3$, i.e. $\frac{X}{\norm{\mu}}\eqdist N(\frac{\mu}{\norm{\mu}},\frac{\sigma^2}{\norm{\mu}^2} I_3)$, then $\Cov (\pr(X))= vI_2$ for $0\leq v< \frac{\pi^2 -4}{4}$ and the relation
\[
f\left( \frac{\sigma^2}{\norm{\mu}^2}\right) := \frac{\trace(\Cov(\pr(X)))}{2} =v
\]
is a bijection.
\end{prop}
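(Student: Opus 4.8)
The plan is to reduce the statement to a one-dimensional claim about the scalar $v(\lambda):=\tfrac12\trace\!\big(\Cov(\pr(X))\big)$, regarded as a function of $\lambda:=\sigma^2/\norm{\mu}^2$, and to show that $\lambda\mapsto v(\lambda)$ is a continuous, strictly increasing bijection from $(0,\infty)$ onto $\big(0,\tfrac{\pi^2-4}{4}\big)$. First I would normalise: since $\pr(aX)\eqdist\pr(X)$ for every $a>0$, replacing $X$ by $X/\norm{\mu}$ we may assume $\mu=(0,0,1)^T$ and $\Sigma=\lambda I_3$. Rotational invariance of $N(\mu,\lambda I_3)$ about the $\mu$-axis (equivalently, the explicit computation \eqref{eq:varianceS2} after the substitution $\sigma\mapsto\sqrt\lambda$) shows $\Cov(\pr(X))=v(\lambda) I_2$, and by Definition \ref{def:intrinsiccovar} together with Theorem \ref{thm:average} one has
\[
v(\lambda)=\tfrac12\,\mathbb{E}\big[\dist(\pr(X),\pr(\mu))^2\big]=\tfrac12\,\mathbb{E}[\Theta^2],
\]
where $\Theta\in[0,\pi]$ is the angle between $X$ and $\mu$; this is exactly the function $f$ of \eqref{eq:varianceS2}, now seen as a function of $\lambda$.

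Next I would settle the endpoint behaviour and continuity. As $\lambda\to0^+$, writing $X=\mu+\sqrt\lambda Z$ with $Z\sim N(0,I_3)$ gives $\pr(X)\to\pr(\mu)$ in probability, hence $\log_{\pr(\mu)}(\pr(X))\to0$ in probability; since $\norm{\log_{\pr(\mu)}(y)}\le\pi$ for a.e.\ $y\in\mathbb{S}^2$, dominated convergence yields $v(\lambda)\to0$. As $\lambda\to\infty$, $p_{\pr(X)}\to\tfrac1{4\pi}\mathds 1_{\mathbb{S}^2}$ boundedly (the Remark preceding the statement), so $\Cov(\pr(X))\to\tfrac1{4\pi}\int_{\mathbb{S}^2}\log_{\pr(\mu)}(y)\log_{\pr(\mu)}(y)^T\,\vol_{\mathbb{S}^2}(y)=\tfrac{\pi^2-4}{4}I_2$, i.e.\ $v(\lambda)\to\tfrac{\pi^2-4}{4}$. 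Continuity of $v$ on $(0,\infty)$ follows from dominated convergence in \eqref{eq:varianceS2}, whose integrand is jointly continuous in $(\lambda,\phi)$ (note $\varphi>0$, so $\Phi/\varphi$ is smooth) and is locally dominated in $\lambda$.

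The crucial step is strict monotonicity, and here the representation $v(\lambda)=\tfrac12\mathbb{E}[\Theta^2]$ together with Gaussian integration by parts makes the sign explicit. Put $s:=1/\sqrt\lambda$; the direction of $X$ equals that of $se_3+Z$, $Z=(Z_1,Z_2,Z_3)\sim N(0,I_3)$, so writing $\rho:=\sqrt{Z_1^2+Z_2^2}$ (independent of $W:=s+Z_3\sim N(s,1)$) one has $\Theta=g(\rho,W)$ with
\[
g(\rho,w):=\arccos\!\Big(\tfrac{w}{\sqrt{\rho^2+w^2}}\Big)\in[0,\pi],\qquad \partial_w g(\rho,w)=-\frac{\rho}{\rho^2+w^2}\le0 .
\]
By Stein's lemma, $\tfrac{d}{ds}\mathbb{E}\big[g(\rho,W)^2\mid\rho\big]=\mathbb{E}\big[\partial_w\big(g(\rho,W)^2\big)\mid\rho\big]$, so after taking the expectation over $\rho$,
\[
\frac{d}{ds}\,\mathbb{E}[\Theta^2]=-\,\mathbb{E}\!\left[\frac{2\,\Theta\,\rho}{\rho^2+W^2}\right]<0 ,
\]
the strict inequality holding because the integrand is nonnegative and is strictly positive on the full-measure event $\{\rho>0\}$, and all interchanges being legitimate since $g^2\le\pi^2$ is bounded and $\int_{\mathbb{R}}\rho\,(\rho^2+w^2)^{-1}\,dw=\pi$. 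Since $s=1/\sqrt\lambda$ is strictly decreasing in $\lambda$, $v$ is strictly increasing on $(0,\infty)$.

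Combining the pieces, $v$ is continuous and strictly increasing on $(0,\infty)$ with limits $0$ and $\tfrac{\pi^2-4}{4}$ at the two ends, so by the intermediate value theorem and injectivity it is a bijection onto $\big(0,\tfrac{\pi^2-4}{4}\big)$; in particular $0\le v<\tfrac{\pi^2-4}{4}$ and $v$ is invertible, which is the assertion. I expect the monotonicity step to be the only genuine obstacle: differentiating \eqref{eq:varianceS2} directly in $\lambda$ produces a combination of $\Phi$- and $\varphi$-terms whose sign is not manifest, and one must also watch the region $\phi=\pi/2$ where $\D=\cos\phi/\sqrt\lambda$ changes sign --- the Stein reformulation above is precisely what circumvents this.
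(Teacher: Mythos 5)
Your proof is correct, but it takes a genuinely different route from the paper's. The paper reduces to the same scalar function $f$ of \eqref{eq:varianceS2}, substitutes $x=1/\sigma$, and shows $f'(x)<0$ --- but it does so by brute force: expanding $\Phi/\varphi$ as a Dawson-type power series (Lemma \ref{lem:Dawson}), integrating term by term against $J_m=\int_0^\pi\phi^2\cos^m(\phi)\sin(\phi)\,\dd\phi$ (Lemma \ref{lem:Jterms}), computing every coefficient of $f'(x)\exp(x^2/2)$ in closed form (Lemma \ref{lem:constantsoffprim}), and finally proving negativity by squeezing the resulting alternating series between the auxiliary entire functions $M$ and $N$ of Lemma \ref{lem:simplifiedseriesineq}. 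Your identity $2v=\mathbb{E}[\Theta^2]$ with $\Theta=\arccos\bigl(W/\sqrt{\rho^2+W^2}\bigr)$, $W\sim N(s,1)$ independent of $\rho$, combined with Stein's lemma in the location parameter $s=1/\sqrt{\lambda}$, makes the sign of the derivative manifest in a few lines and replaces the entire chain of Lemmas \ref{lem:Dawson}--\ref{lem:simplifiedseriesineq}; it also sidesteps the sign change of $\D=\cos(\phi)/\sigma$ at $\phi=\pi/2$ that forces the paper into term-by-term bookkeeping. You additionally prove the endpoint limits and continuity, hence surjectivity onto $\bigl(0,\tfrac{\pi^2-4}{4}\bigr)$, which the paper only records in a remark. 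What the paper's computation buys in exchange is explicit two-sided bounds on the coefficients of $f'(x)\exp(x^2/2)$, potentially useful for quantitative or numerical control of $f^{-1}$. The only points to make airtight in a final write-up are the ones you already flag: Stein's lemma is applied conditionally on the full-measure event $\{\rho>0\}$, where $g(\rho,\cdot)^2$ is smooth with derivative dominated by $2\pi\rho/(\rho^2+w^2)$ uniformly in $s$, and strict negativity follows because the integrand $2\Theta\rho/(\rho^2+W^2)$ is strictly positive there.
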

As a consequence of Proposition \ref{prop:bijection} and Theorem \ref{thm:empiricalvarience} we can conclude that given measurements on the sphere, we can estimate the scalar variance of an isotropically distributed normal random vector in $\mathbb{R}^3$. This leads to the next result that can be viewed as the main theorem of the present paper. Its proof follows directly from Theorem \ref{thm:empiricalvarience} and Proposition \ref{prop:bijection}. The rate of convergence is immediate from noting that \cite[Theorem 2]{hotz2022central} applies to $\mathbb{S}^2$, and thus the empirical mean has rate of convergence $\sqrt{L}$, and by Theorem \ref{thm:empiricalvarience} so does the empirical covariance.

\begin{theorem}
\label{thm:estimatecovar}
    Let $X$ be a normally distributed random variable with mean $\mu\in \mathbb{R}^3$ and isotropic variance $\sigma^2 I_3$, i.e. $X\eqdist N(\mu,\sigma^2 I_3)$. Given independent measurements $(x_1,x_2,\dots, x_L)$ from $\pr(X)$, we can estimate $\lambda=\frac{\sigma^2}{\norm{\mu}^2}$ by 
    \[
    \hat{\lambda}=f^{-1}\left(\frac{\trace(\hat{V})}{2}\right)
    \]
    where $\hat{V}$ is the empirical covariance matrix given in Equation \eqref{eq:empiricalcovariance} and where $f$ is the bijection given in Proposition \ref{prop:bijection} and defined in Equation \eqref{eq:varianceS2}. Moreover it holds that
    \[
    \hat{\lambda} \overset{\mathbb{P}}{\longrightarrow} \frac{\sigma^2}{\norm{\mu}^2}
    \]
    as $L\to \infty$, with rate of convergence $\sqrt{L}$.
\end{theorem}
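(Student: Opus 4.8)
The plan is to assemble Theorem~\ref{thm:estimatecovar} from the two machinery pieces already in place, so the proof is essentially a chain of implications rather than new analysis. First I would fix the underlying distribution $X \eqdist N(\mu,\sigma^2 I_3)$ and pass to the normalized version $X/\norm{\mu} \eqdist N(\mu/\norm{\mu}, \lambda I_3)$ with $\lambda = \sigma^2/\norm{\mu}^2$; since $\pr(aX) = \pr(X)$ for every $a>0$, the projected sample $\pr(x_1),\dots,\pr(x_L)$ has exactly the same law as a sample drawn from $\pr(Y)$ with $Y \eqdist N(\mu/\norm{\mu},\lambda I_3)$. This reduces everything to the $\norm{\mu}=1$ setting, where Theorem~\ref{thm:average} gives that the intrinsic (Fréchet) mean of $\pr(Y)$ is $\pr(\mu)$ and is unique (the density \eqref{eq:sphereprobdens} is strictly positive and rotationally symmetric about $\pr(\mu)$, so the minimizer of the Fréchet functional is unique — this point should be stated explicitly), and Proposition~\ref{prop:bijection} gives that $\Cov(\pr(Y)) = v I_2$ with $v = f(\lambda)$ and $f$ a bijection onto $[0,(\pi^2-4)/4)$.

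Next I would invoke Theorem~\ref{thm:empiricalvarience} with the i.i.d.\ sample $\xi_\ell = \pr(x_\ell)$ on $M = \mathbb{S}^2$: the hypotheses (compact, geodesically complete, unique mean, isotropic covariance $vI$) are met by the previous paragraph, so the empirical covariance $\hat V = \frac{1}{L-1}\sum_{\ell=1}^L \log_{\hat\mu}(\xi_\ell)\log_{\hat\mu}(\xi_\ell)^T$ converges in probability to $vI_2$, with the same rate as the empirical mean $\hat\mu$. Consequently $\trace(\hat V)/2 \overset{\mathbb{P}}{\longrightarrow} \trace(vI_2)/2 = v = f(\lambda)$. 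Then, because $f$ is a continuous bijection (its strict monotonicity and continuity follow from the series analysis underlying Proposition~\ref{prop:bijection}), the inverse $f^{-1}$ is continuous on $[0,(\pi^2-4)/4)$, and the continuous mapping theorem yields
\[
\hat\lambda = f^{-1}\!\left(\frac{\trace(\hat V)}{2}\right) \overset{\mathbb{P}}{\longrightarrow} f^{-1}(f(\lambda)) = \lambda = \frac{\sigma^2}{\norm{\mu}^2}.
\]

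Finally, for the rate of convergence I would note, as the excerpt already indicates, that \cite[Theorem~2]{hotz2022central} applies to $M = \mathbb{S}^2$ (which is compact, geodesically complete, and has the required regularity), so the empirical mean $\hat\mu$ satisfies $\sqrt{L}\log_\mu(\hat\mu) \overset{d}{\longrightarrow} N(0,V)$; by Theorem~\ref{thm:empiricalvarience} the empirical covariance $\hat V$ inherits this $\sqrt{L}$ rate, hence so does $\trace(\hat V)/2$, and composing with the locally Lipschitz map $f^{-1}$ (its derivative is nonzero wherever $f' \neq 0$) propagates the $\sqrt{L}$ rate to $\hat\lambda$ via the delta method. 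The main obstacle I anticipate is not any of these steps individually but making sure the regularity hypotheses line up cleanly: specifically, justifying that the Fréchet mean of $\pr(Y)$ is genuinely unique (so that both Theorem~\ref{thm:empiricalvarience} and \cite{hotz2022central} are applicable), and confirming that $f^{-1}$ is smooth — equivalently $f' \neq 0$ — at the relevant point so the delta method gives a nondegenerate $\sqrt{L}$ rate rather than merely consistency; both of these reduce to properties of $f$ established in the proof of Proposition~\ref{prop:bijection}.
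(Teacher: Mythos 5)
Your proposal follows essentially the same route as the paper: the paper's proof is simply the observation that the claim follows from Theorem \ref{thm:empiricalvarience} and Proposition \ref{prop:bijection}, with the $\sqrt{L}$ rate coming from \cite[Theorem 2]{hotz2022central} applied to $\mathbb{S}^2$ and inherited by the empirical covariance. Your additional care about uniqueness of the Fr\'echet mean, continuity of $f^{-1}$, and propagating the rate through $f^{-1}$ via the delta method fills in steps the paper leaves implicit, and is correct.
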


\begin{figure}[!ht]
\centering
\includegraphics[width=0.7\textwidth]{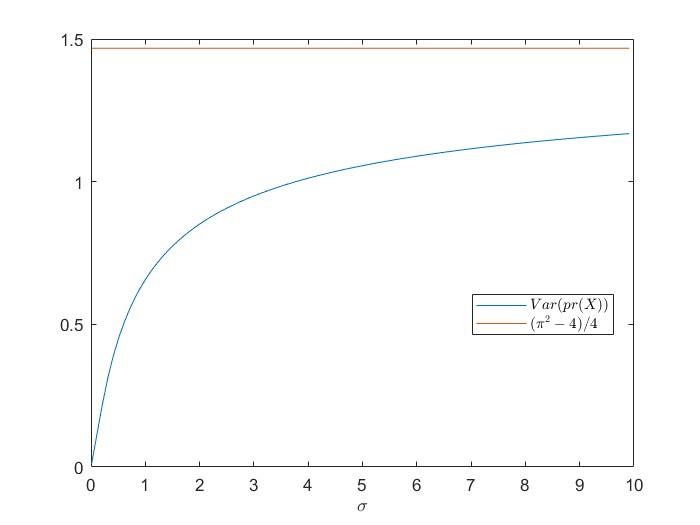}
\caption{A plot of the (scalar) variance of $\pr(X)$ where $X\eqdist N(\mu,\sigma^2 I_3)$ with $\mu\in \mathbb{S}^2$. The red line indicates the upper bound $(\pi^2-4)/4 =\lim_{\sigma\to \infty} \Cov(\pr(X))$.}
\label{fig:varbij}
\end{figure}
We conclude this section with  Table \ref{tab:simofvar}, illustrating an empirical verification of Theorem \ref{thm:estimatecovar}. The simulations are conducted by generating $L$ samples from $X\eqdist N(\mu,\sigma^2 I_3)$ with $\mu(0,0,1)^T\in \mathbb{S}^2$ and $\sigma=1$. The sample covariance is computed as in Algorithm \ref{alg:varestS2}, and its error is then compared to the theoretical covariance. 
\begin{table}[h]
\centering
\begin{tabular}{|l|l|l|l|l|l|l|l|l|}
\hline
L     & 30                 & 50                  & 100                 & 1000                & $10^4$ & $10^5$ & $10^6$              \\ \hline
error & 0.013 & 0.0080 & 0.0055 & 0.0033 & 0.0015 & 8.4e-05 & 2.9e-05 \\ \hline
\end{tabular}
\caption{The absolute error of $100$ times repeated Monte-Carlo empirical covariance compared to the theoretical covariance of the projected normal distribution using $L$ data points and $\sigma=1$.}
\label{tab:simofvar}
\end{table}

\section{Proofs}
\label{sec:proofs}
\subsection{Proof of Theorem \ref{thm:empiricalvarience}}
\label{sec:empiricalvarience}
First, note that
\[
\frac{1}{L-1} \sum_{\ell=1}^L \log_{\hat{\mu}}(\xi_\ell)\log_{\hat{\mu}}(\xi_\ell)^T
\]
is a linear map $T_{\hat{\mu}} \mathbb{S}^2\rightarrow T_{\hat{\mu}} \mathbb{S}^2$ but $\Cov(\xi)$ is by definition a linear map $T_{\mu} \mathbb{S}^2\rightarrow T_{\mu} \mathbb{S}^2$. In order to compare $\hat{V}$ and $\Cov(\xi)$ we shall parallel transport the linear map $\hat{V}: T_{\hat{\mu}}\rightarrow T_{\hat{\mu}}$ to a linear map $T_{\mu} \mathbb{S}^2\rightarrow T_{\mu} \mathbb{S}^2$. That is we look at $P_{\hat{\mu},\mu}\hat{V} P_{\mu,\hat{\mu}}$ and see how far away it is from $vI_2$.
It follows from Equation \eqref{eq:Taylor} that
\[
P_{\hat{\mu},\mu} \log_{\hat{\mu}}(\xi_\ell)\log_{\hat{\mu}}(\xi_\ell)^T P_{\mu,\hat{\mu}}
\]
has first order expansion
\[
\begin{aligned}
P_{\hat{\mu},\mu} \log_{\hat{\mu}}(\xi_\ell)\log_{\hat{\mu}}(\xi_\ell)^T P_{\mu,\hat{\mu}} &=\log_\mu(\xi_\ell)\log_\mu(\xi_\ell)^T+\left( \nabla_{\log_\mu(\hat{\mu})} \log_\mu(\xi_\ell) \right)\log_\mu(\xi_\ell)^T\\
&+ \log_\mu(\xi_\ell)\left( \nabla_{\log_\mu(\hat{\mu})} \log_\mu(\xi_\ell) \right)^T+ \mathcal{O}(\dist(\mu,\hat{\mu})^2).
\end{aligned}
\]
Therefore, 
\[
\begin{aligned}
\lim_{n\to \infty} &\norm{\mathbb{E}\left[P_{\hat{\mu},\mu}\frac{1}{n-1} \sum_{\ell=1}^n \log_{\hat{\mu}}(\xi_\ell)\log_{\hat{\mu}}(\xi_\ell)^T P_{\mu,\hat{\mu}} -vI\right]}\\
\leq&\lim_{n\to \infty}\norm{ \frac{1}{n-1}\sum_{\ell=1}^n\mathbb{E}\left[\log_\mu(\xi_\ell)\log_\mu(\xi_\ell)^T \right] - vI}\\
&+\lim_{n\to \infty} \norm{ \mathbb{E}\left[\left( \nabla_{\log_\mu(\hat{\mu})} \log_\mu(\xi_\ell) \right)\log_\mu(\xi_\ell)^T+ \log_\mu(\xi_\ell)\left( \nabla_{\log_\mu(\hat{\mu})} \log_\mu(\xi_\ell) \right)^T\right]}\\
&+ \lim_{n\to \infty}\mathbb{E}\left[\mathcal{O}(\dist(\hat{\mu},\mu)^2)\right]\\
\leq& \norm{vI-vI}+ \lim_{n\to \infty} \mathbb{E}\left[\mathcal{O}(\dist(\hat{\mu},\mu))\right]
\end{aligned}.
\]
Here, according to \cite[Proposition 1]{hotz2022central}, it holds that $\dist(\mu,\hat{\mu})$ converges to zero almost surely. Since $M$ is compact, it has finite mass and thus $\dist(\mu,\hat{\mu})$ converges in expectation to 0 by the dominated convergence theorem. Finally, the fact that the empirical covariance has the same rate of convergence as the empirical mean follows from the last inequality. This completes the proof.

\qed

\subsection{Proof of Theorem \ref{thm:average}}
\label{seq:prooflemaverage}
By the very definition, we need to show that
\begin{equation}
\label{eq:arginfintegral2}
\arginf_{q\in \mathbb{S}^2} \int_{\mathbb{S}^2} \dist(q,y)^2 p_{\pr(X)}(y) d\mathbb{S}^2(y) = \pr(\mathbb{E}[X])
\end{equation}
for $X\eqdist N(\mu,\sigma^2 I_3)$. By re-scaling $X$ with a factor of $\frac{1}{\norm{\mu}}$ and by symmetry  a rotation, we may without loss of generality assume $\mu=(-1,0,0)^T$. 

We make a simple first-derivative test to find the minimum in \eqref{eq:arginfintegral2}. Let $(\theta_1,\phi_1)$ be the spherical coordinates that are integrated over the sphere and let $(\theta_2,\phi_2)$ be the coordinates that minimize the integral. Hence we need to show that $(\theta_2,\phi_2) = (\pi,\pi/2)$. Writing the integral in spherical coordinates and using the fact that the derivative with respect to $\phi_2$ is zero, it follows that $(\theta_2,\phi_2)$ satisfies
\[
\frac{\partial}{\partial \phi_2}\int_{\mathbb{S}^2} \dist^2((\theta_1,\phi_1),(\theta_2,\phi_2)) p_{\pr(X)}(\theta_1,\phi_1) \sin(\phi_1) d\theta_1 d\phi_1=0.
\]
Now by Leibniz integral rule the derivative may be moved inside the integral and it holds
\[
\begin{aligned}
\int_0^{2\pi} \int_0^\pi & 2\dist((\theta_1,\phi_1),(\theta_2,\phi_2)) \frac{(\cos(\phi_1)\sin(\phi_2)-\sin(\phi_1)\cos(\phi_2)\cos(\theta_2-\theta_1))}{\sqrt{1-\left(\cos(\phi_1)\cos(\phi_2) + \sin(\phi_1)\sin(\phi_2)\cos(\theta_2-\theta_1)\right)^2}}\\
&p_{\pr(X)}(\theta_1,\phi_1)\sin(\phi_1)  \dd\theta_1 \dd\phi_1=0.
\end{aligned}
\]
By plugging in $(\theta_2,\phi_2)= (\pi,\pi/2)$ into the integral on the left-hand side yields an integral
\[
\begin{aligned}
2\int_0^{2\pi} \int_0^\pi & \acos(\sin(\phi_1)\cos(\pi-\theta_1) ) \frac{\cos(\phi_1)}{\sqrt{1-\sin^2(\phi_1)\cos^2(\pi-\theta_1)}}\\
&p_{\pr(X)}(\theta_1,\phi_1)\sin(\phi_1) \dd\theta_1 \dd\phi_1.
\end{aligned}
\]
This integral equals zero, which can be seen by observing that the integrand is odd along $\phi_1$ around $\frac{\pi}{2}$ since $p_{\pr(X)}$ is symmetric around $\mu$ by construction. 
With similar arguments, we obtain that
\[
\frac{\partial}{\partial \theta_2}\int_{\mathbb{S}^2} \dist^2((\theta_1,\phi_1),(\theta_2,\phi_2)) p_{\pr(X)}(\theta_1,\phi_1) \sin(\phi_1) \dd\phi_1 \dd\theta_1
\]
at $(\theta_2,\phi_2)= (\pi,\pi/2)$ reduces to
\[
2\int_0^{2\pi} \int_0^\pi \acos(\sin(\phi_1)\cos(\pi-\theta_1) ) \frac{\sin^2(\phi_1) \sin(\pi-\theta_1)}{\sqrt{1-\sin^2(\phi_1)\cos^2(\pi-\theta_1)}}p_{\pr(X)}(\theta_1,\phi_1) \dd\phi_1 \dd\theta_1
\]
that equals zero by symmetry around $\theta_1=\pi$. Therefore, one can conclude that $\mu=\pr(\mathbb{E}[X])$ is a local extremum for the integral in Equation \eqref{eq:arginfintegral2}.

Next we shall argue why it is a global minimum. Note that a very similar computation will show that $-\mu$, i.e. $(\theta_2,\phi_2)=(0,\pi/2)$ is another local extremum. In fact, $\mu$ and $-\mu$ are the only local extremum for the integral in Equation \eqref{eq:arginfintegral2} since it is the only two points for which the distance function is rotationally symmetric around the line in $\mathbb{R}^3$ which is spanned by $\mu$. By direct computation
\[
p_{\pr(X)}\left(\pi,\frac{\pi}{2}\right) = \frac{1}{(2\pi)^{3/2}} \exp(-\frac{1}{2\sigma^2}) \left(\frac{1}{\sigma} + \frac{1}{\sigma^2} \frac{\Phi(\frac{1}{\sigma})}{\varphi(\frac{1}{\sigma})}+ \frac{\Phi(\frac{1}{\sigma})}{\varphi(\frac{1}{\sigma})}\right)
\]
and
\[
p_{\pr(X)}\left(0,\frac{\pi}{2}\right) = \frac{1}{(2\pi)^{3/2}} \exp(-\frac{1}{2\sigma^2}) \left(\frac{-1}{\sigma} + \frac{1}{\sigma^2} \frac{\Phi(\frac{-1}{\sigma})}{\varphi(\frac{-1}{\sigma})}+ \frac{\Phi(\frac{-1}{\sigma})}{\varphi(\frac{-1}{\sigma})}\right),
\]
hence $p_{\pr(X)}(\mu)>p_{\pr(X)}(-\mu)$. More generally, a similar computation shows that $p_{\pr(X)}(y)> p_{\pr(X)}(Ry)$, if $\langle y,\mu\rangle >0$ and where $R$ is the reflection mapping over the $y,z$-plane, i.e.
\[
R \begin{pmatrix}
    a\\ b\\ c
\end{pmatrix} = \begin{pmatrix}
    -a \\ b\\ c
\end{pmatrix}.
\]
Tautologically, the distance function increases the farther away one goes, and points farther away will contribute more in the integral. Therefore $(\theta_2,\phi_2)= (\pi,\pi/2)$ yields a global minimum for the integral in Equation \eqref{eq:arginfintegral2}.

\qed

\subsection{Proof of Proposition \ref{prop:bijection}}
\label{sec:bijection}

Without loss of generality, we assume $\mu= (0,0,1)^T$. In this case the functions $A,B,C,\D$ inside Equation \eqref{eq:sphereprobdens} are given by
\[
A= \frac{1}{\sigma^2},
\]
\[
B=\frac{1}{\sigma^2}\cos(\phi),
\]
\[
C= -\frac{1}{2\sigma^2},
\]
and
\[
\D=\frac{1}{\sigma} \cos(\phi).
\]
By \eqref{eq:varianceS2} it holds that $\Cov(\pr(X))$ is isotropic and we can write
\[ 
\begin{aligned}
\frac{\trace(\Cov(\pr(X)))}{2} =& \frac{1}{(2\pi)^{1/2}} \exp(-\frac{1}{2\sigma^2})\\
&\cdot \int_0^\pi  \phi^2\left(\frac{\cos(\phi)}{\sigma} + \left( \frac{\cos^2(\phi)}{\sigma^2} + 1\right)  \frac{\Phi(\frac{\cos(\phi)}{\sigma})}{\varphi(\frac{\cos(\phi)}{\sigma})} \right) \sin(\phi)\dd\phi. 
\end{aligned}
\]
Denote
\[
f(\sigma)= \exp(-\frac{1}{2\sigma^2}) \int_0^\pi  \phi^2\left(\frac{\cos(\phi)}{\sigma} + \left( \frac{\cos^2(\phi)}{\sigma^2} + 1\right)  \frac{\Phi(\frac{\cos(\phi)}{\sigma})}{\varphi(\frac{\cos(\phi)}{\sigma})} \right) \sin(\phi)\dd\phi.
\]
In order to obtain the claim, it suffices to prove that $f(\sigma)$ is strictly increasing from which it follows that $\frac{\trace(\Cov(\pr(X)))}{2}$ is strictly increasing in $\sigma$ as well. For notational simplicity, we set $x=\frac{1}{\sigma}$ and show that $f(x)$ is strictly decreasing in $x$, where now $f(x)$, with a slight abuse of notation, is given by
\begin{equation}
\label{eq:fx}
f(x)=\exp(-\frac{x^2}{2})\int_0^\pi \phi^2\left(x\cos(\phi) + \left( x^2\cos^2(\phi) + 1\right)  \frac{\Phi(x\cos(\phi))}{\varphi(x\cos(\phi))} \right) \sin(\phi)\dd\phi.
\end{equation}
By differentiating in $x$, we get
\[
\begin{aligned}
f'(x)&= \left.\exp(\frac{-x^2}{2})\right[ \\
& -x\int_0^\pi \phi^2\cos(\phi)x\sin(\phi)\dd\phi && =:I_1\\
& +\int_0^\pi \phi^2 \left(-x^3\cos^2(\phi)-x+3x\cos^2(\phi)+x^3\cos^4(\phi)\right) \frac{\Phi(x\cos(\phi))}{\varphi(x\cos(\phi))} \sin(\phi)\dd\phi && =:I_2\\
& +\left. \int_0^\pi \phi^2\left(2\cos(\phi)+x^2\cos^3(\phi)\right)\sin(\phi) \dd\phi\right] &&=:I_3.
\end{aligned}
\]
Immediately, we have that
\[
I_1=x^2\frac{\pi^2}{4}
\]
and
\[
I_3=-\frac{\pi^2}{2}-\frac{5\pi^2}{32}x^2.
\]
In order to show $f'(x)<0$, we need to decipher $I_2$ that is more complicated. The main idea is to show that $f'(x)$ is analytic for all $x\geq 0$ and then show that there is a strictly decreasing analytic function between $f'(x)$ and $0$. We begin with Lemma \ref{lem:Dawson} showing that $\frac{\Phi}{\varphi}$ is a Dawson-like function and therefore analytic. In Lemma \ref{lem:Jterms} the series expansion of $I_2$ is integrated term-wise. The terms of the series expression for $I_2$ is given inductively in Lemma \ref{lem:inductiveform} which are then inserted to give explicit expressions for the terms of $f'(x)\exp(x^2/2)$ in Lemma \ref{lem:constantsoffprim}. Lemma \ref{lem:lowerandincreasing} then gives a lower bound on the odd terms of $f'(x)\exp(x^2/2)$, and Lemma \ref{lem:upperbound} gives an upper bound. By Lemma \ref{lem:decresing} we show that the series expression of $f'(x)\exp(x^2/2)$ is eventually decreasing as a series and in combination with Corollary \ref{cor:alternatingstatement} it is concluded that $f'(x)\exp(x^2/2)$ is entire. The proof is finished by comparing $f'(x)\exp(x^2/2)$ to a linear combination of analytic functions computed in Lemma \ref{lem:simplifiedseriesineq}.

\begin{lemma}
\label{lem:Dawson}
It holds that
\[
\begin{aligned}
\frac{\Phi}{\varphi} (x) = \sum_{k=0}^\infty \frac{1}{(2k+1)!!}x^{2k+1} + \frac{\sqrt{2\pi}}{2}\sum_{k=0}^{\infty} \frac{1}{(2k)!!}x^{2k} =: \sum_{k=0}^\infty d_k x^k
\end{aligned}
\]
where the series converges everywhere.
\end{lemma}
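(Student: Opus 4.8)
The plan is to recognize $g(x):=\frac{\Phi(x)}{\varphi(x)}=e^{x^2/2}\int_{-\infty}^x e^{-t^2/2}\,dt$ as the unique solution of an elementary first‑order linear ODE, and then to verify that the series on the right‑hand side solves the very same initial value problem. First I would record the two basic facts about $g$: by the fundamental theorem of calculus together with the product rule,
\[
g'(x)= x\,e^{x^2/2}\int_{-\infty}^x e^{-t^2/2}\,dt + e^{x^2/2}e^{-x^2/2}= x\,g(x)+1,
\]
and $g(0)=\Phi(0)/\varphi(0)=\int_{-\infty}^0 e^{-t^2/2}\,dt=\tfrac12\sqrt{2\pi}$.

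Next, denote by $h(x)$ the proposed series. Since $(2k)!!=2^k k!$, the even part of $h$ is literally $\tfrac12\sqrt{2\pi}\sum_{k\ge 0}(x^2/2)^k/k!=\tfrac12\sqrt{2\pi}\,e^{x^2/2}$, and a comparable ratio estimate shows the odd series $\sum_{k\ge 0}x^{2k+1}/(2k+1)!!$ also converges for every $x\in\mathbb{R}$; hence $h$ is everywhere convergent (this already gives the last assertion of the lemma) and may be differentiated term by term. An index shift using $(2k)!!=2k\cdot(2k-2)!!$, $(2k+1)!!=(2k+1)\cdot(2k-1)!!$ and the convention $(-1)!!=1$ then yields $h'(x)=x\,h(x)+1$, while clearly $h(0)=\tfrac12\sqrt{2\pi}$. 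Finally, $u:=g-h$ satisfies $u'=x\,u$ with $u(0)=0$, so $\frac{d}{dx}\!\bigl(u(x)e^{-x^2/2}\bigr)=0$ forces $u\equiv 0$, i.e. $g=h$, which is the claimed identity.

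As an alternative (essentially equivalent) route one could expand $g$ itself in a Taylor series $\sum_k c_k x^k$ — justified because $\Phi$ extends to an entire function and $\varphi$ is entire and nonvanishing — substitute into $g'=xg+1$, read off $c_1=1$ and the recursion $(k+1)c_{k+1}=c_{k-1}$ for $k\ge 1$, and solve it separately along even and odd indices using $c_0=\tfrac12\sqrt{2\pi}$. I expect the only mildly delicate point, in either route, to be the double‑factorial bookkeeping when verifying $h'=x\,h+1$ — in particular producing the constant term $1$ from the $k=0$ term of the differentiated odd series via $(-1)!!=1$ — but this is routine once one notes that the even series is simply the exponential series in $x^2/2$, which also disposes of all convergence and term‑by‑term differentiation concerns.
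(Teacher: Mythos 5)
Your proof is correct, but it takes a genuinely different route from the paper. The paper splits $\tfrac{\Phi}{\varphi}(x)=e^{x^2/2}\int_{-\infty}^0 e^{-t^2/2}\,dt+e^{x^2/2}\int_0^x e^{-t^2/2}\,dt$, recognizes the second piece as $\sqrt{2}\,D_-(x/\sqrt{2})$ for Dawson's function $D_-(x)=e^{x^2}\int_0^x e^{-t^2}\,dt$, and imports the classical series $D_-(x)=\sum_k \tfrac{2^k}{(2k+1)!!}x^{2k+1}$ (together with its everywhere-convergence) as an external fact; the first piece is just $\tfrac{\sqrt{2\pi}}{2}e^{x^2/2}$ expanded as the exponential series. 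You instead characterize $g=\Phi/\varphi$ by the initial value problem $g'=xg+1$, $g(0)=\tfrac{\sqrt{2\pi}}{2}$, verify directly that the proposed series converges everywhere (the even part being literally $\tfrac{\sqrt{2\pi}}{2}e^{x^2/2}$, the odd part by a ratio estimate) and solves the same IVP, and conclude by the elementary uniqueness argument $\frac{d}{dx}\bigl(u(x)e^{-x^2/2}\bigr)=0$. Your route is fully self-contained and handles the convergence claim without appealing to properties of $D_-$; the paper's route is shorter on the page and makes explicit the link to the Dawson function that the surrounding text alludes to. The double-factorial bookkeeping you flag as the delicate point does check out: $O'(x)=\sum_{k\ge 0}x^{2k}/(2k-1)!!=1+xO(x)$ with $(-1)!!=1$, so there is no gap.
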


\begin{proof}
Note that $\frac{\Phi}{\varphi} (x)$ is very similar to the Dawson's function (originally studied in \cite{dawson1897numerical}, see also \cite{abramowitz1968handbook} for further details) given by
\[
D_{-}(x)= \exp(x^2)\int_0^x \exp(-t^2)\dd t
\]
and that has series expansion
\[
D_{-}(x) = \sum_{k=0}^\infty \frac{2^k}{(2k+1)!!}x^{2k+1}.
\]
Note that
\[
\begin{aligned}
\int_0^x\exp(-\frac{t^2}{2})\dd t = \sqrt{2}\int_0^{x/\sqrt{2}} \exp(-u^2)\dd u
\end{aligned}
\]
by the variable substitution $u= t/\sqrt{2}$. Hence,
\[
\begin{aligned}
\exp(\frac{x^2}{2}) \int_0^x \exp(-\frac{t^2}{2})\dd t& = \sqrt{2}D_{-}(x/\sqrt{2})\\ 
&= \sqrt{2}\sum_{k=0}^\infty  \frac{2^k}{(2k+1)!!} \frac{x^{2k+1}}{2^k \sqrt{2}}= \sum_{k=0}^\infty \frac{1}{(2k+1)!!} x^{2k+1}.
\end{aligned}
\]
Moreover,
\[
\int_{-\infty}^0 \exp(-\frac{t^2}{2}) \dd t=\frac{\sqrt{2 \pi}}{2}
\]
and hence
\[
\exp(\frac{x^2}{2}) \int_{-\infty}^0 \exp(-\frac{t^2}{2}) \dd t
\]
has series expansion
\[
\frac{\sqrt{2 \pi}}{2} \sum_{k=0}^\infty \frac{1}{k!} \frac{x^{2k}}{2^k}= \frac{\sqrt{2 \pi}}{2} \sum_{k=0}^\infty \frac{1}{(2k)!!} x^{2k}.
\]
It follows that
\[
\begin{aligned}
\frac{\Phi}{\varphi} (x)&= \exp(\frac{x^2}{2}) \int_{-\infty}^0 \exp(-\frac{t^2}{2}) \dd t + \exp(\frac{x^2}{2}) \int_{0}^x \exp(-\frac{t^2}{2}) \dd t\\
&=\sum_{k=0}^\infty \frac{1}{(2k+1)!!}x^{2k+1} + \frac{\sqrt{2\pi}}{2}\sum_{k=0}^{\infty} \frac{1}{(2k)!!}x^{2k}
\end{aligned}
\]
proving the claimed series representation. Finally, the convergence everywhere follows from the fact that Dawson's function converges everywhere.
\end{proof}
Using Lemma \ref{lem:Dawson} it follows that $I_2$ can be rewritten as
\begin{equation}
\label{eq:seriesofI2}
I_2 = \sum_{k=0}^\infty d_k \int_0^\pi  \phi^2 \left(-x^3\cos^2(\phi)-x+3x\cos^2(\phi)+x^3\cos^4(\phi)\right)x^k\cos^k(\phi) \sin(\phi) \dd \phi.
\end{equation}
Each term in Equation \eqref{eq:seriesofI2} involves
\[
J_m=\int_0^\pi \phi^2 \cos^m(\phi) \sin(\phi)\dd\phi
\]
that we study next.
\begin{lemma}
\label{lem:Jterms}
The $J_m$ sequence satisfies 
$J_0=\pi^2-4$ and for even $m\neq 0$
\[
\frac{1}{m+1}\left(\pi^2-4\frac{m!!}{(m+1)!!}\left( 1+ \sum_{j=0}^{\frac{m}{2}-1} \frac{1}{2^{2j+1}(2j+3)}{2j+1 \choose j}\right)\right),
\]
while for $m\in \mathbb{N}$ odd we have 
\[
J_m = \frac{\pi^2}{m+1}\left(\frac{m!!}{(m+1)!!} -1\right).
\]
\end{lemma}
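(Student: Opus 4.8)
The plan is to reduce $J_m$ to the simpler family $K_n := \int_0^\pi \phi\cos^n(\phi)\,\dd\phi$ by one integration by parts, and then to evaluate $K_n$ via a recursion in $n$. Since $-\tfrac{1}{m+1}\cos^{m+1}(\phi)$ is an antiderivative of $\cos^m(\phi)\sin(\phi)$, integrating by parts in $J_m$ gives
\[
J_m = \frac{(-1)^m \pi^2}{m+1} + \frac{2}{m+1}\,K_{m+1},
\]
the boundary contribution at $\phi=0$ vanishing because of the factor $\phi^2$. Thus the whole statement follows once one has a closed form for $K_n$.

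For the recursion I would differentiate $\phi\sin(\phi)\cos^{n-1}(\phi)$, substitute $\sin^2=1-\cos^2$, and integrate over $[0,\pi]$; the left-hand side vanishes because $\sin(0)=\sin(\pi)=0$, leaving $0=\frac{1-(-1)^n}{n}+nK_n-(n-1)K_{n-2}$, i.e.
\[
K_n = \frac{n-1}{n}\,K_{n-2} - \frac{1-(-1)^n}{n^2},
\]
with base cases $K_0=\pi^2/2$ and $K_1=-2$ obtained directly. For $n$ even the recursion is homogeneous and unrolls to $K_{2p}=\tfrac{\pi^2}{2}\tfrac{(2p-1)!!}{(2p)!!}$; plugging $n=m+1$ with $m$ odd into the displayed identity for $J_m$ and simplifying yields $J_m=\frac{\pi^2}{m+1}\big(\frac{m!!}{(m+1)!!}-1\big)$, which is the odd-$m$ claim, and the same computation with $K_1$ gives $J_0=\pi^2-4$.

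For $m$ even, $n=m+1$ is odd and the inhomogeneous recursion must be solved. Writing $a_p:=K_{2p+1}$ one has $a_p=\tfrac{2p}{2p+1}a_{p-1}-\tfrac{2}{(2p+1)^2}$ with $a_0=-2$, and an induction (using $(2p+1)!!=(2p+1)(2p-1)!!$ and $\tfrac{2p\,(2p-2)!!}{(2p-1)!!}=\tfrac{(2p)!!}{(2p-1)!!}$ to absorb the factor at each step) gives
\[
K_{2p+1} = -\frac{2\,(2p)!!}{(2p+1)!!}\left(1 + \sum_{i=1}^{p} \frac{(2i-1)!!}{(2i)!!\,(2i+1)}\right).
\]
Substituting this into $J_m=\frac{\pi^2}{m+1}+\frac{2}{m+1}K_{m+1}$ with $p=m/2$ produces the stated formula, once the double factorials are rewritten via the elementary identity $\frac{(2j+1)!!}{(2j+2)!!}=\frac{1}{2^{2j+1}}\binom{2j+1}{j}$ (both sides equal $\frac{(2j+1)!}{2^{2j+1}j!\,(j+1)!}$), which, after the reindexing $j=i-1$, turns $\sum_{i=1}^{m/2}\frac{(2i-1)!!}{(2i)!!(2i+1)}$ into $\sum_{j=0}^{m/2-1}\frac{1}{2^{2j+1}(2j+3)}\binom{2j+1}{j}$, matching the lemma.

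The argument is entirely elementary: repeated integration by parts plus Wallis-type evaluations. The only step requiring genuine care is solving the inhomogeneous recursion for odd $n$ and checking that the resulting double-factorial sum coincides, term by term after reindexing, with the binomial sum in the statement; so the bookkeeping in the last paragraph is the real crux, while everything else is routine.
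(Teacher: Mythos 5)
Your argument is correct, and it follows essentially the same route as the paper: both proofs start from the identical integration by parts against $\phi^2$, reducing $J_m$ to the boundary term $\frac{(-1)^m\pi^2}{m+1}$ plus $\frac{2}{m+1}\int_0^\pi\phi\cos^{m+1}(\phi)\,\dd\phi$, and both ultimately rest on the standard reduction formula for powers of cosine together with the conversion $\frac{(2j+1)!!}{(2j+2)!!}=\frac{1}{2^{2j+1}}\binom{2j+1}{j}$. The only difference is organizational: you evaluate $K_{m+1}$ by deriving the two-term recursion $K_n=\frac{n-1}{n}K_{n-2}-\frac{1-(-1)^n}{n^2}$ and solving it by induction, whereas the paper substitutes the explicit finite-sum antiderivative of $\cos^{m+1}$ in one step; your packaging is slightly cleaner to verify but mathematically equivalent.
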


\begin{proof}
For $J_0$ we observe immediately
\[
J_0= \int_0^\pi \phi^2 \sin(\phi) \dd\phi= \pi^2-4.
\]
Let next $m$ be odd. Then integration by parts gives
\[
\begin{aligned}
J_m=& \left[ -\phi^2 \frac{\cos^{m+1}(\phi)}{m+1} \right]_0^\pi + \frac{2}{m+1} \int_0^\pi\phi \cos^{m+1}(\phi)\dd\phi\\
=& -\frac{\pi^2}{m+1} + \frac{2}{m+1}\left[\frac{m!!}{(m+1)!!}\phi^2+\phi\sum_{j=0}^{\frac{m-1}{2}} \cos^{m-2j}(\phi) \sin(\phi)\frac{m!!}{(m-2j)!!} \right.\\
& \cdot\left. \frac{(m-2j-1)!!}{(m+1)!!} \right]_0^\pi - \frac{2}{m+1} \sum_{j=0}^{\frac{m-1}{2}}\frac{m!!}{(m-2j)!!}\frac{(m-2j-1)!!}{(m+1)!!}\\
&\cdot \int_0^\pi \cos^{m-2j}(\phi) \sin(\phi)d\phi -\frac{2}{m+1}\frac{m!!}{(m+1)!!} \int_0^\pi \phi \dd\phi\\
&= -\frac{\pi^2}{m+1}+\frac{2}{m+1}\frac{m!!}{(m+1)!!}\pi^2- \frac{1}{m+1} \frac{m!!}{(m+1)!!} \pi^2\\
&= \frac{\pi^2}{m+1}\left(\frac{m!!}{(m+1)!!} -1\right).
\end{aligned}
\]
Similarly, when $m>0$ is even, integration by parts gives
\[
\begin{aligned}
J_m =& \left[ -\phi^2 \frac{\cos^{m+1}(\phi)}{m+1} \right]_0^\pi + \frac{2}{m+1} \int_0^\pi\phi \cos^{m+1}(\phi)\dd\phi\\
=& \frac{\pi^2}{m+1} + \frac{2}{m+1} \left[\phi \sin(\phi)\frac{m!!}{(m+1)!!}+ \phi\sum_{j=0}^{\frac{m}{2}-1}\cos^{m-2j}\sin(\phi) \frac{m!!}{(m-2j)!!}\right.\\
&\cdot\left. \frac{(m-1-2j)!!}{(m+1)!!}\right]_0^\pi- \frac{2}{m+1} \frac{m!!}{(m+1)!!} \int_0^\pi \sin(\phi)d\phi\\
& - \frac{2}{m+1} \sum_{j=0}^{\frac{m}{2}-1} \frac{m!!}{(m-2j)!!} \frac{(m-1-2j)!!}{(m+1)!!} \int_0^\pi \cos^{m-2j}(\phi)\sin(\phi)\dd\phi\\
=& \frac{\pi^2}{m+1} - \frac{4}{m+1}\frac{m!!}{(m+1)!!}-\frac{4}{m+1}\frac{m!!}{(m+1)!!} \sum_{j=0}^{\frac{m}{2}-1}\frac{(m-1-2j)!!}{(m-2j)!! (m+1-2j)}\\
=&\frac{1}{m+1}\left(\pi^2-4\frac{m!!}{(m+1)!!}\left( 1+ \sum_{j=0}^{\frac{m}{2}-1} \frac{1}{2^{2j+1}(2j+3)}{2j+1 \choose j}\right)\right).
\end{aligned} 
\]
This completes the proof.
\end{proof}
\begin{lemma}
\label{lem:inductiveform}
Denote
\[
I_2= \sum_{n=1}^\infty a_n x^n.
\]
Then the coefficients $a_k$ satisfy
\[
a_1 = \frac{4\sqrt{2\pi}}{9}, \quad a_2 =-\frac{7\pi^2}{32},
\]
and
\[
a_k= d_{k-1}(3J_{k+1}-J_{k-1})+ d_{k-3}(J_{k+1}-J_{k-1}),\quad k\geq 3.
\]

\end{lemma}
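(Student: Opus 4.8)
The plan is to start from the term-wise expansion \eqref{eq:seriesofI2}, carry out each inner integral in closed form using the numbers $J_m$ of Lemma \ref{lem:Jterms}, and then re-index the resulting double series so as to read off the coefficient of $x^n$.

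Concretely, I would first fix $k$ and expand the polynomial factor against $\cos^k(\phi)$, which gives $-x^3\cos^{k+2}(\phi)-x\cos^{k}(\phi)+3x\cos^{k+2}(\phi)+x^3\cos^{k+4}(\phi)$; integrating this against $\phi^2\sin(\phi)$ and using $J_m=\int_0^\pi\phi^2\cos^m(\phi)\sin(\phi)\,d\phi$ yields
\[
\int_0^\pi\phi^2\bigl(-x^3\cos^2(\phi)-x+3x\cos^2(\phi)+x^3\cos^4(\phi)\bigr)\cos^k(\phi)\sin(\phi)\,d\phi=x\,(3J_{k+2}-J_k)+x^3\,(J_{k+4}-J_{k+2}).
\]
Multiplying by the prefactor $d_k x^k$ coming from Lemma \ref{lem:Dawson}, the $k$-th term of \eqref{eq:seriesofI2} becomes $d_k(3J_{k+2}-J_k)\,x^{k+1}+d_k(J_{k+4}-J_{k+2})\,x^{k+3}$. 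The interchange of summation and integration here is justified because the series for $\Phi/\varphi$ converges everywhere by Lemma \ref{lem:Dawson}, hence locally uniformly, and $x\cos(\phi)$ stays in a compact set as $\phi$ runs over $[0,\pi]$; this also shows the resulting power series for $I_2$ converges.

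Finally I would collect powers of $x$: the first family of terms contributes to $x^n$ exactly when $k=n-1$ (hence for $n\geq 1$), giving $d_{n-1}(3J_{n+1}-J_{n-1})$, and the second family exactly when $k=n-3$ (hence for $n\geq 3$), giving $d_{n-3}(J_{n+1}-J_{n-1})$; there is no constant term since the smallest power produced is $x^1$. This gives $I_2=\sum_{n\geq 1}a_n x^n$ with $a_n=d_{n-1}(3J_{n+1}-J_{n-1})+d_{n-3}(J_{n+1}-J_{n-1})$ for $n\geq 3$, which is the asserted recursion, together with $a_1=d_0(3J_2-J_0)$ and $a_2=d_1(3J_3-J_1)$. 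To obtain the two base cases explicitly, substitute $d_0=\tfrac{\sqrt{2\pi}}{2}$, $d_1=1$ from Lemma \ref{lem:Dawson} and $J_0=\pi^2-4$, $J_1=-\tfrac{\pi^2}{4}$, $J_2=\tfrac{1}{3}\bigl(\pi^2-\tfrac{28}{9}\bigr)$, $J_3=-\tfrac{5\pi^2}{32}$ from Lemma \ref{lem:Jterms}, which yields $a_1=\tfrac{\sqrt{2\pi}}{2}\cdot\tfrac{8}{9}=\tfrac{4\sqrt{2\pi}}{9}$ and $a_2=3\bigl(-\tfrac{5\pi^2}{32}\bigr)+\tfrac{\pi^2}{4}=-\tfrac{7\pi^2}{32}$. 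I expect the only genuinely error-prone step to be keeping the two index shifts straight (shift by $1$ from the degree-one terms, shift by $3$ from the degree-three terms) when merging the two families into a single power series; everything else is a routine computation.
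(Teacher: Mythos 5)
Your proof is correct and takes essentially the same route as the paper's: term-wise integration of the expansion \eqref{eq:seriesofI2} against $\phi^2\sin(\phi)$ using the values $J_m$ from Lemma \ref{lem:Jterms}, followed by re-indexing the two families of terms (shifts by $1$ and by $3$) to read off $a_n$. You in fact supply more detail than the paper, which labels this a straightforward calculation; your explicit values $d_0=\tfrac{\sqrt{2\pi}}{2}$, $d_1=1$, $J_0,\dots,J_3$ and the resulting $a_1$, $a_2$ all check out.
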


\begin{proof}
Immediately $I_2(0)=0$ so the constant term is zero. Next, we get expressions for $a_n$ in terms of $d_n$:s and $J_n$:s from Equation \eqref{eq:seriesofI2}. Then straightforward calculations give
\[
a_1 =d_0(-J_0+3J_2)=\frac{\sqrt{2\pi}}{2}\left( 4-\pi^2 + \frac{3}{3}\left(\pi^2 -4\frac{2}{3}\left(1+\frac{1}{6}\right)\right)\right)= \frac{4\sqrt{2\pi}}{9},
\]
\[
a_2= d_1(-J_1+3J_3)= \frac{\pi^2}{4} + \frac{3\pi^2}{4}\left(\frac{3}{8}-1\right)=-\frac{7\pi^2}{32},
\]
and, for $k\geq 3$,
\[
a_k= d_{k-1}(3J_{k+1}-J_{k-1})+ d_{k-3}(J_{k+1}-J_{k-1}).
\]

\end{proof}
\begin{lemma}
\label{lem:constantsoffprim}
Denote
\begin{equation}
\label{eq:c_k}
f'(x)\exp(\frac{x^2}{2})= \sum_{k=0} c_k x^k.
\end{equation}
Then
\[
c_0=-\frac{\pi^2}{2},\quad c_1 =\frac{4\sqrt{2\pi}}{9}, \quad c_2=- \frac{\pi^2}{8},
\]
for $k\geq 3$ odd
\[
c_k=\frac{2\sqrt{2\pi}}{(k+2)!!}\left(1 + \sum_{j=0}^{\frac{k-1}{2}-1} \frac{1}{2^{2j+1}(2j+3)}{2j+1 \choose j}- \frac{k+1}{2^{k}(k+2)} {k \choose \frac{k-1}{2}} \right),
\]
and for $k\geq 4$ even
\[
c_k= -\frac{\pi^2}{(k+2)!!}.
\]
\end{lemma}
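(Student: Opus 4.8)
The plan is to start from the decomposition $f'(x)\exp(x^2/2)=I_1+I_2+I_3$ established above, in which $I_1=\tfrac{\pi^2}{4}x^2$ and $I_3=-\tfrac{\pi^2}{2}-\tfrac{5\pi^2}{32}x^2$ are explicit degree-two polynomials, while $I_2=\sum_{n\ge 1}a_nx^n$ is the power series whose coefficients are given in Lemma~\ref{lem:inductiveform}. Since Lemma~\ref{lem:Dawson} guarantees convergence everywhere and the term-wise integration leading to \eqref{eq:seriesofI2} is legitimate, the identity \eqref{eq:c_k} is a genuine identity of power series, so it is enough to match coefficients. Collecting the coefficients of $x^0,x^1,x^2$ in $I_1+I_2+I_3$ gives at once $c_0=-\tfrac{\pi^2}{2}$, $c_1=a_1=\tfrac{4\sqrt{2\pi}}{9}$, and $c_2=\tfrac{\pi^2}{4}+a_2-\tfrac{5\pi^2}{32}=\tfrac{\pi^2}{4}-\tfrac{7\pi^2}{32}-\tfrac{5\pi^2}{32}=-\tfrac{\pi^2}{8}$. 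Because $I_1,I_3$ have degree $2$, for every $k\ge 3$ one simply has $c_k=a_k$, so the remaining work is to evaluate
\[
a_k=d_{k-1}\,(3J_{k+1}-J_{k-1})+d_{k-3}\,(J_{k+1}-J_{k-1})
\]
for $k\ge 3$ and identify it with the two stated closed forms.

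The natural division is by the parity of $k$, since this fixes the parities of $k-1,k-3,k+1$ and hence selects which branch of Lemma~\ref{lem:Dawson} governs the $d$'s and which branch of Lemma~\ref{lem:Jterms} governs the $J$'s. When $k$ is even, the indices $k-1,k-3,k+1$ are odd, so $d_{k-1}=1/(k-1)!!$ and $d_{k-3}=1/(k-3)!!$ are purely rational and $J_{k+1},J_{k-1}$ are the odd-index values $J_m=\tfrac{\pi^2}{m+1}\bigl(\tfrac{m!!}{(m+1)!!}-1\bigr)$. Substituting these four closed forms into the formula for $a_k$, I would group the contributions coming from the ``$-1$'' inside each $J_m$ separately from those coming from the double-factorial ratios; after clearing the common denominator $(k+2)!!$ and simplifying the resulting quotients of double factorials, the whole expression should collapse to $-\pi^2/(k+2)!!$. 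This is a finite algebraic identity in $k$, which I would anchor at the base value $k=4$ (where it yields $-\pi^2/48$) and then carry out in general.

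When $k$ is odd, the indices $k-1,k-3,k+1$ are even, so $d_{k-1}=\tfrac{\sqrt{2\pi}}{2(k-1)!!}$, $d_{k-3}=\tfrac{\sqrt{2\pi}}{2(k-3)!!}$, and $J_{k+1},J_{k-1}$ are the even-index values carrying the partial sums $S_m=1+\sum_{j=0}^{m/2-1}\tfrac{1}{2^{2j+1}(2j+3)}\binom{2j+1}{j}$. Here one first notes that the coefficients of $\pi^2$ cancel (consistently with the claimed formula containing no $\pi^2$), leaving a $\sqrt{2\pi}$-multiple. The key structural point is that $J_{k+1}$ and $J_{k-1}$ agree in the partial sum up to index $j=\tfrac{k-1}{2}-2$ and differ only through their double-factorial prefactors and through the single extra term $j=\tfrac{k-1}{2}-1$ appearing in $J_{k+1}$; combining the two $J$'s with the weights $d_{k-1},d_{k-3}$ makes the bulk of the partial sums telescope, leaving exactly the constant $1$, the full sum $\sum_{j=0}^{(k-1)/2-1}$, and one leftover term. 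Converting the residual double factorials to binomials via $\tfrac{(2n-1)!!}{(2n)!!}=\binom{2n}{n}/4^n$ and performing an index shift should turn that leftover into $-\tfrac{k+1}{2^k(k+2)}\binom{k}{(k-1)/2}$, with overall normalization $\tfrac{2\sqrt{2\pi}}{(k+2)!!}$, which is the assertion.

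The routine parts are the low-order coefficients and the even-$k$ algebra, which are mechanical. I expect the main obstacle to be the odd-$k$ bookkeeping: tracking the two partial sums' index ranges, verifying that the telescoping leaves precisely the right remainder, and in particular producing the correction term $-\tfrac{k+1}{2^k(k+2)}\binom{k}{(k-1)/2}$ with the correct constant requires a couple of double-factorial-to-binomial conversions and an index shift that are error-prone. To guard against slips I would cross-check the final formula against the values $c_3=\tfrac{26\sqrt{2\pi}}{225}$ and $c_5$ obtained directly from $a_3,a_5$ before committing to the general argument.
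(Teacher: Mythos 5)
Your proposal follows the paper's proof essentially verbatim: read off $c_0,c_1,c_2$ from $I_1+I_2+I_3$, note $c_k=a_k$ for $k\ge 3$, then split by the parity of $k$ and substitute the corresponding branches of Lemma~\ref{lem:Dawson} and Lemma~\ref{lem:Jterms} into the formula of Lemma~\ref{lem:inductiveform}; your observations that the $\pi^2$ contributions cancel in the odd case and that $c_3=\tfrac{26\sqrt{2\pi}}{225}$ are both correct. The remaining double-factorial bookkeeping you defer is exactly the algebra the paper also displays and then asserts, so there is no substantive difference.
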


\begin{proof}
By considering expressions for $I_1$, $I_2$, and $I_3$ gives
\[
c_0=-\frac{\pi^2}{2},
\]
\[
c_1=a_1 =\frac{4\sqrt{2\pi}}{9},
\]
\[
c_2=a_2+\frac{3 \pi^2}{32} =- \frac{\pi^2}{8},
\]
and 
\[
c_k=a_k \qquad \text{ whenever } k\geq 3,
\]
where $a_k$ is given in Lemma \ref{lem:inductiveform}. It follows that for $k\geq 3$ odd we have
\[
\begin{aligned}
c_k=&d_{k-1}(3J_{k+1}-J_{k-1})+ d_{k-3}(J_{k+1}-J_{k-1})\\
=& \frac{\sqrt{2\pi}}{2} \left(\frac{1}{(k-1)!!}\left[\frac{3}{k+2}\left(\pi^2-4 \frac{(k+1)!!}{(k+2)!!}\left( 1+\sum_{j=0}^{\frac{k+1}{2}-1}\frac{1}{2^{2j+1}(2j+3)}{2j+1 \choose j} \right)\right.\right.\right.\\
&\left.-\frac{1}{k} \left( \pi^2-4\frac{(k-1)!!}{k!!}\left( 1+\sum_{j=0}^{\frac{k-1}{2}-1}\frac{1}{2^{2j+1}(2j+3)}{2j+1 \choose j} \right) \right)\right]\\
&+\frac{1}{(k-3)!!}\left[\frac{1}{k+2}\left(\pi^2-4 \frac{(k+1)!!}{(k+2)!!}\left( 1+\sum_{j=0}^{\frac{k+1}{2}-1}\frac{1}{2^{2j+1}(2j+3)}{2j+1 \choose j} \right).\right.\right.\\
&\left.\left.-\frac{1}{k} \left( \pi^2-4\frac{(k-1)!!}{k!!}\left( 1+\sum_{j=0}^{\frac{k-1}{2}-1}\frac{1}{2^{2j+1}(2j+3)}{2j+1 \choose j} \right) \right)\right]\right)\\
=&  \frac{2\sqrt{2\pi}}{(k+2)!!}\left(1 + \sum_{j=0}^{\frac{k-1}{2}-1} \frac{1}{2^{2j+1}(2j+3)}{2j+1 \choose j}- \frac{k+1}{2^{k}(k+2)} {k \choose \frac{k-1}{2}} \right).
\end{aligned}
\]
Similarly for $k\geq 4$ even we get
\[
\begin{aligned}
c_k= & d_{k-1}(3J_{k+1}-J_{k-1})+ d_{k-3}(J_{k+1}-J_{k-1})\\
=& \frac{\pi^2}{(k-1)!!}\left[\frac{3}{k+2}\left( \frac{(k+1)!!}{(k+2)!!}-1\right) - \frac{1}{k}\left(\frac{(k-1)!!}{k!!}-1 \right)\right]\\
&+ \frac{\pi^2}{(k-3)!!}\left[ \frac{1}{k+2}\left( \frac{(k+1)!!}{(k+2)!!}-1\right) - \frac{1}{k}\left(\frac{(k-1)!!}{k!!}-1 \right)\right]\\
=& -\frac{\pi^2}{(k+2)!!}.
\end{aligned}
\]
This completes the proof.
\end{proof}
\begin{lemma}
\label{lem:lowerandincreasing}
For all $k\geq 3$ odd, set
\begin{equation}
\label{eq:Sk}
S(k)=  1 + \sum_{j=0}^{\frac{k-1}{2}-1} \frac{1}{2^{2j+1}(2j+3)}{2j+1 \choose j}- \frac{k+1}{2^{k}(k+2)} {k \choose \frac{k-1}{2}}.
\end{equation}
Then $S(k)$ is increasing and satisfies $2\sqrt{2\pi}S(k) \geq \pi$.
\end{lemma}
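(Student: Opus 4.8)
The plan is to reduce the inequality to a single base case, by first showing that $S$ is increasing. Set $k = 2m+1$ with $m \geq 1$, so that $\frac{k-1}{2} = m$ and $\binom{k}{(k-1)/2} = \binom{2m+1}{m}$; in this notation
\[
S(2m+1) = 1 + \sum_{j=0}^{m-1}\frac{1}{2^{2j+1}(2j+3)}\binom{2j+1}{j} - \frac{2m+2}{2^{2m+1}(2m+3)}\binom{2m+1}{m}.
\]
First I would compute the telescoping difference $S(2m+3) - S(2m+1)$. Increasing $m$ by one enlarges the sum by exactly the single term $\frac{1}{2^{2m+1}(2m+3)}\binom{2m+1}{m}$; adding it to the subtracted term at level $2m+1$ makes the factor $2m+3$ cancel, which leaves $\frac{1}{2^{2m+1}}\binom{2m+1}{m}$ minus the subtracted term at level $2m+3$. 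Rewriting the latter through the elementary identity $\binom{2m+3}{m+1} = \frac{2(2m+3)}{m+2}\binom{2m+1}{m}$ and simplifying, the whole expression collapses to
\[
S(2m+3) - S(2m+1) = \frac{\binom{2m+1}{m}}{2^{2m}(2m+5)} > 0,
\]
so $S$ is strictly increasing along the odd integers $k \geq 3$.

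Given the monotonicity, it then suffices to verify $2\sqrt{2\pi}\, S(k) \geq \pi$ at the smallest value $k = 3$. A direct evaluation gives $S(3) = 1 + \frac16 - \frac{3}{10} = \frac{13}{15}$, and the bound $2\sqrt{2\pi}\cdot\frac{13}{15} \geq \pi$ is, after squaring and clearing denominators, equivalent to $1352 \geq 225\pi$, which holds since $225\pi < 707 < 1352$. Together with the monotonicity just established, this yields $2\sqrt{2\pi}\, S(k) \geq 2\sqrt{2\pi}\, S(3) \geq \pi$ for all odd $k \geq 3$, which is the assertion.

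The only step needing genuine care is the bookkeeping in the telescoping difference: one has to track the index shift $k \mapsto k+2$ consistently in both the summation range and the closed-form subtracted term, and apply the binomial ratio identity correctly. Everything else — the evaluation of $S(3)$ and the numerical inequality $1352 \geq 225\pi$ — is elementary, and no analytic machinery is required. I expect this bookkeeping to be the main (and only) obstacle.
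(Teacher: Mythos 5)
Your proposal is correct and follows essentially the same route as the paper: establish monotonicity of $S$ via the telescoping difference $S(k+2)-S(k)$ and then verify the inequality at the base case $k=3$, where $S(3)=\tfrac{13}{15}$. The only difference is cosmetic: you evaluate the increment exactly as $\binom{2m+1}{m}/\bigl(2^{2m}(2m+5)\bigr)$, whereas the paper only lower-bounds it by discarding the factor $\tfrac{k+3}{k+4}\leq 1$; both suffice for positivity.
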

\begin{proof}
First note that 
\[
2\sqrt{2\pi}S(3) = 2\sqrt{2\pi}\left( 1 + \frac{1}{6} {1 \choose 0} - \frac{4}{5}\frac{1}{2^3} {3 \choose 1} \right)= 2\sqrt{2\pi}\left( 1+ \frac{1}{6} - \frac{3}{10} \right)= \frac{26\sqrt{2\pi}}{15}\geq \pi.
\]
It remains to show that $S(k)$ is increasing. We have
\[
\begin{aligned}
S(k+2) &= 1 + \sum_{j=0}^{\frac{k-1}{2}} \frac{1}{2^{2j+1}(2j+3)}{2j+1 \choose j}- \frac{k+3}{2^{k+2}(k+4)} {k+2 \choose \frac{k+1}{2}} \\
&= S(k) + \frac{k+1}{2^k(k+2)} {k \choose \frac{k-1}{2}}+ \frac{1}{2^k(k+2)} {k \choose \frac{k-1}{2}} - \frac{k+3}{2^{k+2}(k+4)}{k+2 \choose \frac{k+1}{2}}\\
&= S(k) + \frac{1}{2^k} {k \choose \frac{k-1}{2}} - \frac{k+3}{2^{k+2}(k+4)} {k+2 \choose \frac{k+1}{2}}\\
&\geq  S(k) + \frac{1}{2^k} {k \choose \frac{k-1}{2}} - \frac{1}{2^{k+2}} {k+2 \choose \frac{k+1}{2}}\\
&= S(k) + \frac{(k+1)!}{2^{k+2} \left(\frac{k+1}{2}\right)! \left(\frac{k+3}{2}\right)!} \left(k+3-(k+2) \right) \\
&= S(k) + \frac{(k+1)!}{2^{k+2} \left(\frac{k+1}{2}\right)! \left(\frac{k+3}{2}\right)!}
\end{aligned}
\]
and hence $S(k+2)\geq S(k)$. This completes the proof.
\end{proof}

\begin{cor}
\label{cor:alternatingstatement}
For $x> 0$ the series
\[
\sum_{k=0}^\infty c_k x^k
\]
is alternating, where the coefficients $c_k$ are determined by \eqref{eq:c_k}.
\end{cor}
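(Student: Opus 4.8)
The plan is to determine the sign of each coefficient $c_k$ directly from the closed forms established in Lemma~\ref{lem:constantsoffprim}, together with the lower bound of Lemma~\ref{lem:lowerandincreasing}, and to check that $\sgn(c_k)=(-1)^{k+1}$ for all $k\ge 0$. Once this is known, for any fixed $x>0$ one has $x^k>0$, hence $\sgn(c_k x^k)=(-1)^{k+1}$, which is precisely the assertion that $\sum_{k=0}^\infty c_k x^k$ is alternating.

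Concretely, I would split into three cases. First, the small indices are handled by inspection: Lemma~\ref{lem:constantsoffprim} gives $c_0=-\frac{\pi^2}{2}<0$, $c_1=\frac{4\sqrt{2\pi}}{9}>0$, and $c_2=-\frac{\pi^2}{8}<0$, so the claimed sign pattern holds for $k=0,1,2$. Second, for even $k\ge 4$, Lemma~\ref{lem:constantsoffprim} gives $c_k=-\frac{\pi^2}{(k+2)!!}$, which is negative since $(k+2)!!>0$; this matches $(-1)^{k+1}=-1$. Third, for odd $k\ge 3$, Lemma~\ref{lem:constantsoffprim} gives $c_k=\frac{2\sqrt{2\pi}}{(k+2)!!}\,S(k)$ with $S(k)$ as in \eqref{eq:Sk}; here $(k+2)!!>0$ and, by Lemma~\ref{lem:lowerandincreasing}, $2\sqrt{2\pi}\,S(k)\ge \pi>0$, so $c_k>0$, in agreement with $(-1)^{k+1}=+1$.

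The only point that is not pure bookkeeping is the positivity of the odd-indexed coefficients: in that case the bracketed factor $S(k)$ is a difference of positive quantities, so its sign cannot be read off the expression itself, and the needed input is exactly the estimate $2\sqrt{2\pi}\,S(k)\ge\pi$ furnished by Lemma~\ref{lem:lowerandincreasing} (which is why that lemma was proved). Collecting the three cases yields $\sgn(c_k)=(-1)^{k+1}$ for every $k\ge 0$, and therefore the terms $c_k x^k$ alternate in sign whenever $x>0$, which completes the argument.
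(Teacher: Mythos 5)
Your argument is correct and follows essentially the same route as the paper's proof: negativity of the even-indexed coefficients is read directly off the closed forms in Lemma~\ref{lem:constantsoffprim}, and positivity of the odd-indexed ones comes from the lower bound $2\sqrt{2\pi}\,S(k)\geq\pi$ of Lemma~\ref{lem:lowerandincreasing}. You merely spell out the small indices $k=0,1,2$ explicitly, which the paper leaves implicit.
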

\begin{proof}
For $k$ even we clearly have $c_k<0$. For $k$ odd on the other hand, $c_k$ is positive since
\[
c_k\geq \frac{\pi}{(k+2)!!}
\]
by Lemma \ref{lem:lowerandincreasing}.

\end{proof}
The following result shows that the sum in Lemma \ref{lem:lowerandincreasing} can be bounded from above as well.
\begin{lemma}
\label{lem:upperbound}
For all $k\geq 3$ odd, $S(k)$ defined by \eqref{eq:Sk} satisfies $2\sqrt{2\pi}S(k)\leq \pi^2$.
\end{lemma}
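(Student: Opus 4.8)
The plan is to prove the stronger bound $S(k)\le \pi/2$ for every odd $k\ge 3$, and then to observe that this already forces $2\sqrt{2\pi}\,S(k)\le 2\sqrt{2\pi}\cdot\tfrac{\pi}{2}=\pi\sqrt{2\pi}\le\pi^2$, since $\pi\sqrt{2\pi}\le\pi^2$ is equivalent to $\sqrt{2\pi}\le\pi$, i.e.\ to $2\le\pi$. Thus the whole lemma reduces to the estimate $S(k)\le\pi/2$.

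For that I would argue directly from the definition \eqref{eq:Sk}. Every summand $\frac{1}{2^{2j+1}(2j+3)}\binom{2j+1}{j}$ is positive, and the term $\frac{k+1}{2^{k}(k+2)}\binom{k}{(k-1)/2}$ that is subtracted is nonnegative; hence discarding the subtracted term and extending the finite sum to an infinite one can only increase the value, so $S(k)\le 1+\sum_{j=0}^{\infty}\frac{1}{2^{2j+1}(2j+3)}\binom{2j+1}{j}$. It then remains to evaluate this series, and here the one non-routine observation is the identity $\binom{2j+1}{j}=\tfrac12\binom{2j+2}{j+1}$. Applying it and substituting $n=j+1$ turns the series into $\sum_{n=1}^{\infty}\frac{\binom{2n}{n}}{4^{n}(2n+1)}$, which is precisely the Maclaurin series of $\arcsin$ evaluated at $x=1$ with the $n=0$ term (equal to $1$) removed. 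Since $\arcsin(1)=\pi/2$, the series equals $\pi/2-1$, whence $S(k)\le 1+(\pi/2-1)=\pi/2$, as desired.

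The argument is essentially mechanical once the $\arcsin$-series connection is spotted, so the only genuine point of care — and the one I would expect to be the mild obstacle — is the convergence at the endpoint $x=1$. This is legitimate because $\binom{2n}{n}/4^{n}=O(n^{-1/2})$, so the terms $\frac{\binom{2n}{n}}{4^{n}(2n+1)}$ are $O(n^{-3/2})$ and the series converges absolutely; Abel's theorem (or a direct comparison with the convergent power series for $\arcsin$ on $(-1,1)$) then identifies its sum with $\arcsin(1)=\pi/2$. As an alternative one could avoid evaluating the full series altogether by invoking the monotonicity of $S(k)$ from Lemma \ref{lem:lowerandincreasing} together with its limit, but the direct bound above is shorter and self-contained.
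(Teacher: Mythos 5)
Your proof is correct, and it takes a genuinely different route from the paper's. The paper works with the increment $S(k+2)-S(k)$ (already computed for the monotonicity lemma), bounds the central binomial coefficient via Stirling to get $S(k+2)\leq S(k)+\sqrt{2/\pi}\,(k+2)^{-3/2}$, and then telescopes, comparing the resulting tail to $4\sum_{k\geq 0}(2k+1)^{-3/2}\leq\pi^2$. You instead discard the nonnegative subtracted term, extend the finite sum to the full series, and evaluate that series in closed form: the identity $\binom{2j+1}{j}=\tfrac12\binom{2j+2}{j+1}$ and the reindexing $n=j+1$ correctly convert it into $\sum_{n\geq 1}\binom{2n}{n}/(4^n(2n+1))=\arcsin(1)-1=\pi/2-1$, the endpoint evaluation being legitimate since $\binom{2n}{n}/4^n=O(n^{-1/2})$ makes the series absolutely convergent so Abel's theorem applies. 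This yields the sharp bound $S(k)\leq\pi/2$ (which is in fact $\lim_{k\to\infty}S(k)$, since the subtracted term vanishes in the limit), and the final comparison $2\sqrt{2\pi}\cdot\tfrac{\pi}{2}=\pi\sqrt{2\pi}\leq\pi^2\iff 2\leq\pi$ is airtight. What your approach buys is a cleaner, self-contained argument with an exact constant and comfortable slack ($\pi\sqrt{2\pi}\approx 7.87$ versus $\pi^2\approx 9.87$); what the paper's approach buys is that it recycles the recurrence machinery from Lemma \ref{lem:lowerandincreasing} and never needs the closed-form value of the series. If anything, your version is preferable: the paper's intermediate comparison of the telescoped sum with $4\sum(2k+1)^{-3/2}$ is numerically tighter than the actual limit $\pi\sqrt{2\pi}$ and so deserves scrutiny, whereas your bound is exact.
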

\begin{proof}
By elementary manipulations as in the proof of Lemma \ref{lem:lowerandincreasing} we obtain
\[
S(k+2)=S(k)+ \frac{2\sqrt{2\pi}(k+3)}{2^{k+2}(k+2)(k+4)} {k+2 \choose \frac{k+1}{2}}.
\]
From Stirling's approximation we infer
\[
{k+2 \choose \frac{k+1}{2}} \leq \sqrt{\frac{2}{\pi}}\frac{ 2^{k+2}}{\sqrt{(k+2)}},
\]
and hence
\[
S(k+2)\leq S(k) + \frac{\sqrt{2}}{\sqrt{\pi}(k+2)^{3/2}}.
\]
Moreover, 
\[
S(3)= 1+\frac{1}{6} - \frac{3}{10} \leq 1 + \frac{1}{3^{3/2}}
\]
allows us to estimate
\[
\lim_{k\to \infty} 2\sqrt{2\pi} S(k) \leq 4 \sum_{k=0}^{\infty} \frac{1}{(2k+1)^{3/2}} \leq \pi^2.
\]
Since $S$ is increasing by Lemma \ref{lem:lowerandincreasing}, it follows that $S(k)\leq \pi^2$ for all $k\geq 3$ odd. This completes the proof.
\end{proof}
\begin{lemma}
\label{lem:decresing}
Let the coefficients $c_k,k=1,2,\ldots$ be given by \eqref{eq:c_k} and let $x>0$ be fixed. Then the terms in the series
\[
\sum_{k=M}^\infty c_k x^k
\]
decreases monotonically for large enough $M$.
\end{lemma}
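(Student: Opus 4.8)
The plan is to convert the explicit formulas of Lemma~\ref{lem:constantsoffprim}, together with the two-sided control on the sums $S(k)$ provided by Lemmas~\ref{lem:lowerandincreasing} and~\ref{lem:upperbound}, into uniform bounds on $|c_k|$, and then to compare consecutive terms directly.

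First I would record that for every $k\geq 3$,
\[
\frac{\pi}{(k+2)!!}\;\leq\;|c_k|\;\leq\;\frac{\pi^2}{(k+2)!!}.
\]
For even $k\geq 4$ this is immediate from $|c_k|=\pi^2/(k+2)!!$ together with $\pi<\pi^2$; for odd $k\geq 3$ it is exactly the two-sided bound $\pi\leq 2\sqrt{2\pi}\,S(k)\leq\pi^2$ of Lemmas~\ref{lem:lowerandincreasing} and~\ref{lem:upperbound}, divided through by $(k+2)!!$, since $c_k=2\sqrt{2\pi}\,S(k)/(k+2)!!$ there. In particular $c_k\neq 0$ for $k\geq 3$. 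Applying this to both $k$ and $k+1$ (legitimate once $k\geq 3$) yields
\[
\frac{|c_{k+1}|}{|c_k|}\;\leq\;\frac{\pi^2/(k+3)!!}{\pi/(k+2)!!}\;=\;\pi\,\frac{(k+2)!!}{(k+3)!!}.
\]

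The analytic fact that does the work is then that $(k+2)!!/(k+3)!!\to 0$ as $k\to\infty$. This is a Wallis-type estimate, equivalently a consequence of Stirling's approximation (already invoked in the proof of Lemma~\ref{lem:upperbound}): $(2n-1)!!/(2n)!!=4^{-n}\binom{2n}{n}=\mathcal{O}(n^{-1/2})$ and $(2n)!!/(2n+1)!!=4^{n}(n!)^2/(2n+1)!=\mathcal{O}(n^{-1/2})$. Now fix $x>0$ and choose $M\geq 3$ so large that $\pi x\,(k+2)!!/(k+3)!!<1$ for all $k\geq M$, which is possible since the left-hand side tends to $0$. Then for all $k\geq M$,
\[
|c_{k+1}x^{k+1}|\;=\;\frac{|c_{k+1}|}{|c_k|}\,x\,|c_k x^k|\;\leq\;\pi x\,\frac{(k+2)!!}{(k+3)!!}\,|c_k x^k|\;<\;|c_k x^k|,
\]
so $(|c_k x^k|)_{k\geq M}$ is strictly decreasing. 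Since the terms $c_k x^k$ alternate in sign for $x>0$ by Corollary~\ref{cor:alternatingstatement}, this is precisely the assertion that the terms of $\sum_{k\geq M}c_k x^k$ decrease monotonically in modulus for $M$ large enough.

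I expect no deep obstacle here: the substance has already been invested in Lemmas~\ref{lem:lowerandincreasing} and~\ref{lem:upperbound}, whose role is precisely to make the bounds on $|c_k|$ hold with the \emph{same} denominator $(k+2)!!$ uniformly over the even and odd cases, so that $|c_{k+1}|/|c_k|$ collapses to a constant multiple of the ratio of consecutive double factorials. The only things to be careful about are that decay estimate $(k+2)!!/(k+3)!!\to 0$ (which must be checked in both parity configurations) and the fact that the threshold $M$ depends on $x$.
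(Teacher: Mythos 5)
Your proof is correct and follows essentially the same route as the paper: both bound the ratio $\abs{c_{k+1}x^{k+1}}/\abs{c_k x^k}$ by a constant multiple of $x\,(k+2)!!/(k+3)!!$ using the two-sided control on $S(k)$ from Lemmas~\ref{lem:lowerandincreasing} and~\ref{lem:upperbound}, and conclude from the decay of that double-factorial ratio. Your unified two-sided bound $\pi/(k+2)!!\leq\abs{c_k}\leq\pi^2/(k+2)!!$ is in fact slightly tidier than the paper's parity split, which in the odd-$k$ case writes the upper bound $\pi^2$ in the denominator where the lower bound $\pi$ is the one actually needed (harmlessly, since the limit is zero either way).
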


\begin{proof}
Consider first the case when $k$ is odd. Then using the upper bound of Lemma \ref{lem:upperbound} yields
\[
\abs{\frac{c_{k+1}x^{k+1}}{c_k x^k}} \leq \frac{\frac{\pi^2}{(k+3)!!} }{\frac{\pi^2}{(k+2)!!}}x = \frac{(k+2)!!}{(k+3)!!}x
\]
that is less than one for sufficiently large $k$ (depending on $x$). Similarly for $k$ even we can use the lower bound from Lemma \ref{lem:lowerandincreasing} to obtain
\[
\abs{\frac{c_{k+1}x^{k+1}}{c_k x^k}} \leq \frac{\frac{\pi^2}{(k+3)!!} }{\frac{\pi}{(k+2)!!}}x = \frac{(k+2)!!}{(k+3)!!} \pi x
\]
that again is less than one for large enough $k$. This yields the claim.
\end{proof}

\begin{lemma}
\label{lem:simplifiedseriesineq}
Denote
\[
M(x)= \sum_{k=0}^\infty \frac{x^{2k}}{(2k+2)!!}
\]
and
\[
N(x) = \sum_{k=0}^\infty \frac{x^{2k+1}}{(2k+3)!!}.
\]
Then
\[
M(x) > N(x)
\]
for all $x\geq 0$.
\end{lemma}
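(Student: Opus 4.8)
The plan is to identify $M$ and $N$ in closed form, reduce the inequality to a one–variable estimate for the Gaussian integral, and then establish that estimate by a short calculus argument.

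Since $(2k+2)!!=2^{k+1}(k+1)!$, we have
\[
M(x)=\tfrac12\sum_{k\ge0}\frac{(x^{2}/2)^{k}}{(k+1)!}=\frac{e^{x^{2}/2}-1}{x^{2}}\qquad(x\ne0),
\]
with $M(0)=\tfrac12$ by continuity. For $N$ I would reuse the identity established inside the proof of Lemma~\ref{lem:Dawson}, namely $E(x):=\sum_{k\ge0}\frac{x^{2k+1}}{(2k+1)!!}=e^{x^{2}/2}\int_{0}^{x}e^{-t^{2}/2}\,dt$; re-indexing with $j=k+1$ gives $x\,N(x)=\sum_{j\ge1}\frac{x^{2j}}{(2j+1)!!}=\frac{E(x)}{x}-1$, hence $N(x)=\frac{E(x)-x}{x^{2}}$ for $x\ne0$ and $N(0)=0$. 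In particular the case $x=0$ is immediate, since $M(0)=\tfrac12>0=N(0)$.

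For $x>0$, multiplying $M(x)>N(x)$ through by $x^{2}>0$ and simplifying, the claim is equivalent to
\[
h(x):=e^{x^{2}/2}\Bigl(1-\int_{0}^{x}e^{-t^{2}/2}\,dt\Bigr)+x-1>0 .
\]
Here $h(0)=0$, and on differentiating, the two contributions coming from $e^{x^{2}/2}$ and from $-\int_{0}^{x}e^{-t^{2}/2}\,dt$ cancel, leaving the clean formula
\[
h'(x)=x\,e^{x^{2}/2}\Bigl(1-\int_{0}^{x}e^{-t^{2}/2}\,dt\Bigr).
\]
If $x_{0}$ is the unique point with $\int_{0}^{x_{0}}e^{-t^{2}/2}\,dt=1$, then $h'\ge0$ on $[0,x_{0}]$, so $h$ increases away from $h(0)=0$ and is strictly positive on $(0,x_{0}]$. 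To finish one must control $h$ on the remaining range, and for that I would bound $h$ from below using an effective Gaussian-tail estimate, e.g.\ $\int_{x}^{\infty}e^{-t^{2}/2}\,dt\le\frac{1}{x}e^{-x^{2}/2}$ (or a sharper two-sided Mills-ratio bound), combined with the value $h(x_{0})>0$ already obtained.

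The main obstacle is precisely this last step. Because $h'$ is positive and then negative, $h$ is \emph{not} monotone, so the inequality cannot be closed by a one-line monotonicity argument; one genuinely has to produce a lower bound for $h$ that stays positive on the whole branch $x>x_{0}$, and the constants in the tail comparison are tight there. An alternative route that sidesteps the closed forms is to write $M(x)-N(x)=\sum_{n\ge0}\frac{(-x)^{n}}{(n+2)!!}$ and group consecutive terms, noting that $\frac{x^{2j}}{(2j+2)!!}-\frac{x^{2j+1}}{(2j+3)!!}\ge0$ as soon as $x\le\frac{(2j+3)!!}{(2j+2)!!}$ and that this threshold increases with $j$; this disposes of small and moderate $x$ at once, while for larger $x$ one shifts the grouping by one and absorbs the finitely many remaining negative pairs into the leading term $\tfrac12$ — where, once more, the accounting for large $x$ is the real work.
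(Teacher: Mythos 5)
Your reduction is correct as far as it goes: $(2k+2)!!=2^{k+1}(k+1)!$ gives $M(x)=\bigl(e^{x^{2}/2}-1\bigr)/x^{2}$, the re-indexing of the Dawson-type series gives $N(x)=\bigl(E(x)-x\bigr)/x^{2}$ with $E(x)=e^{x^{2}/2}\int_{0}^{x}e^{-t^{2}/2}\,dt$, and the claim for $x>0$ is indeed equivalent to $h(x)=e^{x^{2}/2}\bigl(1-\int_{0}^{x}e^{-t^{2}/2}\,dt\bigr)+x-1>0$ with $h'(x)=x\,e^{x^{2}/2}\bigl(1-\int_{0}^{x}e^{-t^{2}/2}\,dt\bigr)$. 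But the step you flag as ``the real work'' is not merely delicate --- it is impossible, because the lemma as stated is false. Since $\int_{0}^{\infty}e^{-t^{2}/2}\,dt=\sqrt{\pi/2}>1$, the factor $1-\int_{0}^{x}e^{-t^{2}/2}\,dt$ is eventually negative and bounded away from zero, so $h(x)\to-\infty$; equivalently $N(x)/M(x)\to\sqrt{\pi/2}\approx1.2533$. Numerically $h$ changes sign near $x\approx1.78$: for instance $M(2)=(e^{2}-1)/4\approx1.597$ while $N(2)=\bigl(e^{2}\int_{0}^{2}e^{-t^{2}/2}\,dt-2\bigr)/4\approx1.710$, which one can confirm directly from the partial sums of the two series. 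Your fallback of pairing consecutive terms of $\sum_{n\ge0}(-x)^{n}/(n+2)!!$ fails for the same reason; no bookkeeping for large $x$ can succeed.

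You should therefore not measure your attempt against the paper's argument, which is itself erroneous. The paper claims $M'(x)\ge N(x)$ and $N'(x)\le M(x)$ and deduces that $M^{2}-N^{2}$ is increasing; but $M'(1)=\bigl((x^{2}-2)e^{x^{2}/2}+2\bigr)/x^{3}\big|_{x=1}\approx0.351<N(1)\approx0.411$, and the binomial bound $\binom{2k+3}{k+1}\ge 2^{2k+2}/\sqrt{k+1}$ used there already fails at $k=0$ (it asserts $3\ge4$). The second claimed inequality is worse: $N'(x)\sim\sqrt{\pi/2}\,e^{x^{2}/2}/x$ while $M(x)\sim e^{x^{2}/2}/x^{2}$, so $N'\le M$ fails badly for large $x$. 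In short, your closed-form reduction is the most valuable part of either argument, because it exposes the failure of the lemma for $x\gtrsim1.78$; the honest conclusion is that the lemma, and with it the paper's route to $f'(x)<0$ in Proposition \ref{prop:bijection}, needs to be repaired rather than completed.
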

\begin{proof}
By differentiating we get
\[
\begin{aligned}
M'(x)&= \sum_{k=0}^{\infty} \frac{(2k+2)x^{2k+1}}{(2k+4)!!} = \sum_{k=0}^\infty \frac{x^{2k+1}}{(2k+3)!!}  \frac{(k+1)}{2^{2k+2}} {2k+3 \choose k+1}.
\end{aligned}
\]
Now using
\[
{2k+3 \choose k+1} \geq \frac{2^{2k+2}}{\sqrt{k+1}}
\]
gives
\[
M'(x) \geq \sum_{k=0}^\infty \frac{\sqrt{k+1}}{(2k+3)!!} x^{2k+1} \geq N(x).
\]
Similarly it holds that
\[
N'(x) =\sum_{k=0}^\infty \frac{(2k+1)x^{2k}}{(2k+3)!!} = \sum_{k=0}^\infty \frac{x^{2k}}{(2k+2)!!} \frac{(2k+1)2^{2k+2}}{(k+2){ 2k+3 \choose k+1}}.
\]
In this case we can use 
\[
{2k+3 \choose k+1} \geq \frac{2^{2k+3}}{2k+3}
\]
leading to
\[
N'(x) \leq \sum_{k=0}^\infty \frac{x^{2k}}{(2k+2)!!} \frac{k+\frac{1}{2}}{(k+2)(2k+3)} \leq M(x).
\]
Combining the two bounds above gives us 
\[
\ddf{ }{ }{x} \left(M^2(x) - N^2(x) \right) = 2M(x) M'(x)-2 N(x) N'(x) \geq M(x)N(x) - M(x)N(x) =0.
\]
Consequently, $M^2-N^2$ is an increasing function for $x\geq 0$ leading to
\[
M^2(x)-N^2(x)\geq M^2(0)-N^2(0) =1.
\]
It follows that
\[
M(x) \geq \sqrt{1+N^2(x)}> N(x)
\]
and the proof is complete.
\end{proof}
We are finally in the position to prove Proposition \ref{prop:bijection}.
\begin{proof}[Proof of Proposition \ref{prop:bijection}.]
By Corollary \ref{cor:alternatingstatement} and Lemma \ref{lem:decresing} the series expansion \eqref{eq:c_k} for $f'(x)\exp(x^2/2)$ is convergent by Leibniz alternating series test. Since the radius of convergence is unbounded, it is analytic and thus we may split the series into its positive part and negative part given by
\[
\sum_{k=0}^\infty  c_{2k} x^{2k} =: Q(x), \qquad \sum_{k=0}^\infty c_{2k+1} x^{2k+1} =: P(x).
\] 
By Lemma \ref{lem:constantsoffprim} it holds that $Q(x)=-\pi^2 M(x)$ where $M(x)$ is as in Lemma \ref{lem:simplifiedseriesineq}. Now Lemma \ref{lem:upperbound} gives
\[
c_{2k} \leq \frac{\pi^2}{(k+2)!!},
\]
and hence
\[
Q(x) \leq \pi^2 N(x),
\]
where $N(x)$ is as in Lemma \ref{lem:simplifiedseriesineq}.
Therefore
\[
f'(x)\exp(x^2/2) = Q(x) + P(X) \leq - \pi^2M(x) + \pi^2N(x).
\]
Applying Lemma \ref{lem:simplifiedseriesineq} to the above inequality now gives
\[
f'(x)\exp(\frac{x^2}{2}) < 0,
\]
and thus $f'(x)<0$ for all $x\geq 0$, where $f$ is given by \eqref{eq:fx}. The claim follows from this. 
\end{proof}

\bibliographystyle{plain}
\bibliography{refs.bib}
\end{document}